\numberwithin{equation}{section}
\theoremstyle{plain}
\newtheorem{theorem}[subsection]{Theorem}
\newtheorem{proposition}[subsection]{Proposition}
\newtheorem{lemma}[subsection]{Lemma}
\newtheorem{corollary}[subsection]{Corollary}
\theoremstyle{definition}
\newcommand{\Q}{\mathbb{Q}}
\newcommand{\Z}{\mathbb Z}
\newcommand{\R}{\mathbb{R}}
\begin{document}

\makeatletter
\def\author@andify{%
  \nxandlist {\unskip ,\penalty-1 \space\ignorespaces}%
    {\unskip {} \@@and~}%
    {\unskip \penalty-2 \space \@@and~}%
}
\makeatother

\title{Fourier series in $\text{BMO}$ with number theoretical implications}

\author{Fernando Chamizo}
\address{Departamento de Matem\'aticas and ICMAT \\ Universidad Aut\'onoma de Ma\-drid \\ Madrid 28049 \\ Spain}
\email{fernando.chamizo@uam.es}

\author{Antonio C\'ordoba}
\address{Departamento de Matem\'aticas and ICMAT \\ Universidad Aut\'onoma de Ma\-drid \\ Madrid 28049 \\ Spain}
\email{antonio.cordoba@uam.es}

\author{Adri\'an Ubis}
\address{Departamento de Matem\'aticas\\ Universidad Aut\'onoma de Madrid \\ Madrid 28049 \\ Spain}
\email{adrian.ubis@gmail.com}

\thanks{The authors are partially supported by the MTM2017-83496-P grant of the MICINN (Spain) and the first author and the second author are also supported by ``Severo Ochoa Programme for Centres of Excellence in R{\&}D'' (SEV-2015-0554)}

\subjclass[2010]{30H35, 42A32, 30H10}

\keywords{BMO, Hilbert's inequality, Fourier series with gaps}

\begin{abstract}
We introduce an elementary argument to bound the $\textrm{BMO}$ seminorm of  Fourier series with gaps giving in particular
a sufficient condition for them to be in this space. Using finer techniques we carry out a detailed study of the series $\sum n^{-1}e^{2\pi i n^2 x}$
providing some insight into how much this $\text{BMO}$ Fourier series differs from defining an $L^\infty$ function. 
\end{abstract}

\maketitle


\section{Introduction}

An unpublished result of C.~Fefferman allows to characterize all possible Hardy inequalities for {functions in $H^1$ (analytic functions in the unit disc so that $\|f\|_{H^1}:=\sup_{r<1}\int_0^{2\pi}|f(re^{i\theta})|\; d\theta<\infty$)}. Namely (see \cite[Th.A]{dyakonov} \cite[p.264]{AnSh}), given a nonnegative sequence $\{a_k\}_{k=1}^\infty$
\begin{equation}\label{ghardy}
 \sum_{k=1}^\infty
 a_k|\widehat{f}(k)|
 \le
 C\| f\|_{H^1},
\end{equation}
where as usual 
$\widehat{f}(k)=\int_0^1e^{-2\pi ikx}f\big(e^{2\pi ix}\big)\; dx$,
holds for some constant $C$ and every $f\in H^1$ 
if and only if
\begin{equation}\label{f_cond}
 \sup_{N\in\Z^+}
 \sum_{j=1}^\infty
 \Big(
 \sum_{jN\le k<(j+1)N}
 a_k
 \Big)^2
 <\infty.
\end{equation}
Using the duality between $H^1$ and $\textrm{BMO}$ {(bounded mean oscillation), a result contained in the classical references \cite{fefferman} \cite{FeSt}, and a property of} $H^1$ multipliers \cite[Th.6.8]{duren} it turns out that \eqref{f_cond} is also a necessary and sufficient condition \cite[Cor.2]{SlSt} for 
\[
 \sum_{k=1}^\infty
 a_ke(kx)\in\textrm{BMO}
 \quad\text{when $a_k\ge 0$}
 \qquad\text{where }e(x):=e^{2\pi i x}.
\]
In fact if \eqref{f_cond} holds then any other Fourier series 
$\sum_{k=1}^\infty b_ke(nx)$ with $|b_n|\le a_n$ also belongs to $\textrm{BMO}$. 
Recall that $\textrm{BMO}$, or more precisely $\textrm{BMO}(\mathbb{T})$, is the space of $1$-periodic integrable functions $f$ such that
\[
 \|f\|_*
 :=
 \sup_{I\subset\mathbb{T}}\|f\|_I
 <\infty
 \qquad\text{with}\qquad
 \|f\|_I=|I|^{-1}\int_I|f-f_I|
\]
where $I$ is an interval of $\mathbb{T}$ and $f_I$ stands for the average of $f$ on $I$. Obviously $\textrm{BMO}\supset L^\infty$ and $\sum_{k=1}^\infty e(kx)/k$ proves that we have a proper inclusion.

Although \eqref{f_cond} provides a full characterization, some authors \cite{dyakonov} \cite{OsTo} have studied what kind of gaps in the nonzero values of $a_k$ can assure \eqref{ghardy} and other generalizations when these positive values are of certain type. 
For instance \cite[Cor.1]{dyakonov} states
\begin{equation}\label{hdyak}
 \sum_{k=1}^\infty
 \frac{|\widehat{f}(\nu_k)|}{k}
 \le
 C\| f\|_{H^1}
\end{equation}
whenever 
$\{\nu_k\}_{k=1}^\infty$ 
is   increasing and 
$\{\nu_k/k\}_{k=1}^\infty$
is nondecreasing. 
In fact the same can be proved under the weaker assumption 
$\inf k(\nu_{k+1}/\nu_k-1)>0$ \cite[Th.1]{dyakonov}. 
On the other hand, it is possible to find examples with  $k(\nu_{k+1}/\nu_k-1)$ going to zero at any rate {and} such that $\sum_{k=1}^\infty k^{-1}e(\nu_kx)$ violates \eqref{f_cond}{:} Hence it does not belong to $\textrm{BMO}$ and \eqref{hdyak} does not hold. 
Results of this kind appear in \cite{OsTo} in a broader scope. For instance, \cite[Th.4]{OsTo} for $q=1$, using duality gives that for $b_k\in\ell^2(\Z^+)$
\begin{equation}\label{osikiewicz}
 \sum_{k=1}^\infty
 b_k e(\nu_k x)\in\textrm{BMO}
 \qquad
 \text{if}\quad
 \frac{\nu_{k+1}}{\nu_k}\ge 1+\delta |b_k|
 \quad
 \text{for some $\delta>0$}.
\end{equation}
{As an aside, for  frequencies given by powers, replacing $\textrm{BMO}$ by $L^p$ leads to interesting open problems \cite{cordoba}.}

The proof of the characterization \eqref{f_cond} of all $\textrm{BMO}$ Fourier series with positive coefficients, as given in \cite{SlSt}, employs the $H^1$ and $\textrm{BMO}$ duality, the atomic decomposition due to Coifman \cite{coifman} and an elementary argument that was many years before successfully employed by Gallagher \cite{gallagher} to get a surprisingly simple proof of a large sieve  inequality in number theory (this remained probably unnoticed by harmonic analysts). 
The results in \cite{dyakonov} and \cite{OsTo} are obtained checking that some control on the gaps of the selected frequencies assures \eqref{f_cond}.

\

This paper has a double purpose. Firstly, having in mind the analytic ideas involved in the large sieve \cite{montgomery}, we derive in \S\ref{section_bounds}  a sufficient condition to have $f=\sum_{k=1}^\infty
 b_k e(\nu_k x)\in\textrm{BMO}$ and to estimate $\|f\|_*$ using 
Hilbert's inequality in the generalized form stated by Montgomery and Vaughan  \cite[(1.7)]{MoVa}
\begin{equation}\label{hilbert}
 \sum_{r\ne s}
 \frac{w_r\bar{w}_s}{\lambda_r-\lambda_s}
 \le
 \frac{3\pi}{2}
 \sum_r |w_r|^2\delta_r^{-1}
\end{equation}
where $\lambda_r$ are real numbers with $|\lambda_r-\lambda_s|\ge \delta_r>0$ for any $s\ne r$.
An advantage on this approach, is that the argument is short and completely elementary, because so it is the proof of \eqref{hilbert}, not depending on the duality result $(H^1)^*=\text{BMO}$. 

Secondly, in \S\ref{section_squares} we devote our efforts to show how other analytic number theory tools give precise information about the remarkable Fourier series
\[
 F(x)
 =
 \sum_{n=1}^\infty
 \frac{e(n^2 x)}n
\]
which is a critical case of some Fourier series considered by  several authors (e.g. \cite{jaffard}, \cite{SeUb}, \cite{ChUb}, {\cite{ChCo}}) and, following Weierstrass, related to Riemann's strategy in the search of continuous nowhere differentiable functions. Roughly speaking we are interested in how far is this $\text{BMO}$ function from being bounded. Namely we fully characterize  the points in which the series converges and we provide fine estimates for the measure of the level sets $\{x\in I\; :\; |F(x)-F_I|>\lambda\}$. In connection with this, recall that a seminal result by John and Nirenberg \cite{JoNi} asserts that
\begin{equation}\label{joni_ineq}
 \frac{1}{|I|}  {\big|\{x\in I\; :\; |f(x)-f_I|>\lambda\}\big|}
 \le C_1 \exp(-C_2 \lambda/\|f\|_*)
\end{equation}
for some constants $C_1$,  $C_2$ and any $f\in \text{BMO}$. It can be rephrased saying that the functions in $\text{BMO}\setminus L^\infty$ have at most logarithmic singularities.

\section{Bounds for the \texorpdfstring{$\text{BMO}$}{BMO} seminorm}\label{section_bounds}

To state our results we consider two sequences, representing frequencies and bounds for the Fourier coefficients
\begin{equation}\label{sequ}
 \{\nu_n\}_{n=1}^\infty\subset\Z^+
 \text{ increasing }
 \quad\text{ and }\quad
 \{a_n\}_{n=1}^\infty\in\ell^2(\Z^+)
 \text{ with }a_n>0.
\end{equation}
Associated to these sequences, we define for each $N\in\Z^+$
\begin{equation}\label{ST}
 S_N = \sum_{n=1}^{N}
 a_n\nu_n
 \qquad\text{and}\qquad
 T_N = \sum_{n=N+1}^{\infty}
 \frac{a_n^2}{M(n)}.
\end{equation}
where 
$M(n)=\min(\nu_n- \nu_{n-1}, \nu_{n+1}-\nu_n)$
with the special definition 
$M(N+1)=\nu_{N+2}-\nu_{N+1}$.

\begin{theorem}\label{general}
 With the notation {introduced before, let us} assume that
 \[
  \kappa:= 
  \sup_{0<\epsilon<1}
  \inf_{N\in\Z^+}
  \bigg(
  4\pi \epsilon S_N
  +\big(6\epsilon^{-1} T_N + 4 \sum_{n=N+1}^\infty a_n^2\big)^{1/2}
  \bigg)
  <\infty.
 \]
 Then
 \[
  \Big\|\sum_{n=1}^\infty
  b_ne(\nu_n x)\Big\|_*\le \kappa 
  \quad
  \text{for any $|b_n|\le a_n$}.
 \]
 In particular this Fourier series belongs to $\text{\rm BMO}$.
\end{theorem}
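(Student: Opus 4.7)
The plan is to prove the bound interval by interval: for a generic interval $I\subset\mathbb{T}$ of length $\epsilon=|I|\in(0,1)$, the goal is to establish
\[
 \|f\|_I \le 4\pi\epsilon S_N + \bigl(6\epsilon^{-1}T_N+4\textstyle\sum_{n>N}a_n^2\bigr)^{1/2}
\]
for every $N\in\Z^+$, and then conclude by taking the infimum over $N$ followed by the supremum over $\epsilon=|I|$, which is exactly the quantity $\kappa$.

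The core step is a frequency splitting at the cut-off $N$: write $f=P_N+Q_N$ with $P_N=\sum_{n\le N} b_n e(\nu_n x)$ and $Q_N=\sum_{n>N}b_n e(\nu_n x)$, so that $\|f\|_I\le\|P_N\|_I+\|Q_N\|_I$. For the low-frequency polynomial $P_N$, a direct derivative bound $|P_N'|\le 2\pi S_N$ (via the triangle inequality and $|b_n|\le a_n$), combined with the mean value theorem, gives $\|P_N\|_I\lesssim \epsilon S_N$, yielding the first term on the right-hand side.

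For the high-frequency tail $Q_N$, the plan is to first reduce the $L^1$ oscillation to an $L^2$ quantity via $\|Q_N\|_I\le 2\bigl(\epsilon^{-1}\int_I|Q_N|^2\bigr)^{1/2}$, an application of Cauchy--Schwarz together with the trivial bound $|(Q_N)_I|\le |I|^{-1}\int_I|Q_N|$. Expanding the square produces a diagonal contribution $\epsilon\sum_{n>N}|b_n|^2$ and an off-diagonal sum
\[
 \sum_{\substack{m,n>N\\ m\ne n}} b_m\bar b_n\cdot\frac{e((\nu_m-\nu_n)(\alpha+\epsilon))-e((\nu_m-\nu_n)\alpha)}{2\pi i(\nu_m-\nu_n)},
\]
where $I=[\alpha,\alpha+\epsilon]$. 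The key manipulation is to absorb the phases into the coefficients: setting $u_n=b_n e(\nu_n(\alpha+\epsilon))$ and $v_n=b_n e(\nu_n\alpha)$ decouples this expression into $(2\pi i)^{-1}$ times the difference of the two sums $\sum_{m\ne n}u_m\bar u_n/(\nu_m-\nu_n)$ and $\sum_{m\ne n}v_m\bar v_n/(\nu_m-\nu_n)$. Each of these now has precisely the shape required by Montgomery--Vaughan's inequality \eqref{hilbert}, with $\lambda_r=\nu_r$ and $\delta_r=M(r)$, giving a bound of $\tfrac{3\pi}{2}T_N$ apiece and hence $\tfrac{3}{2}T_N$ for the full off-diagonal piece.

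The main technical obstacle is exactly this off-diagonal estimate: one has to recognise the integrated exponential as a difference of two Hilbert-type kernels and then check that the gap assumption built into $M(n)$ aligns with the one-sided spacing hypothesis $|\lambda_r-\lambda_s|\ge \delta_r$ in \eqref{hilbert}. Once that is done, combining the diagonal term $\epsilon\sum a_n^2$ with $\tfrac{3}{2}T_N$, squaring up the factor of $2$ from the $L^1\to L^2$ reduction, and taking $\inf_N$ followed by $\sup_\epsilon$ gives the stated bound $\kappa$ on $\|f\|_*$.
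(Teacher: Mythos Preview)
Your proposal is correct and follows the paper's proof essentially step for step: the same low/high frequency split, the mean value theorem on the real and imaginary parts of the low-frequency piece, the $L^1\to L^2$ reduction via Cauchy--Schwarz on the tail, and the application of the Montgomery--Vaughan inequality \eqref{hilbert} to the two endpoint evaluations of the off-diagonal sum. The only cosmetic difference is that you phrase the endpoint evaluations as absorbing phases into new coefficients $u_n,v_n$, whereas the paper simply writes the integrated cross terms as $\big|_r^s$ and applies \eqref{hilbert} at $x=r$ and $x=s$; the arithmetic and constants agree.
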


Choosing $N$ such that $S_N\le \epsilon^{-1}<S_{N+1}$, we deduce readily

\begin{corollary}\label{simpl}
 If $S_NT_N$ and $S_{N+1}/S_N$ are bounded then 
 \[
  \sum_{n=1}^\infty
  b_ne(\nu_n x)\in\textrm{\rm BMO}
  \quad
  \text{for any $|b_n|\le a_n$}
 \]
 and we have a Hardy type inequality
 \[
  \sum_{n=1}^\infty
  a_n\big|\widehat{f}(\nu_n)\big|
  \le C
  \|f\|_{H^1}
  \quad
  \text{for every $f\in H^1$}. 
 \]
\end{corollary}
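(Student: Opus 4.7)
The plan is to feed the boundedness hypotheses into Theorem~\ref{general} via the choice of $N$ already hinted at immediately before the statement: given $\epsilon\in(0,1)$, I would pick $N=N(\epsilon)\in\Z^+$ with $S_N\le\epsilon^{-1}<S_{N+1}$. In the degenerate regimes where no such $N$ exists, I would fall back to $N=1$ (incurring only a bounded overhead depending on the fixed quantity $S_1$) or let $N\to\infty$ (making the tails vanish); both scenarios are strictly easier than the generic one.

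With this choice of $N$ I would control the three contributions inside the definition of $\kappa$ separately. The first, $4\pi\epsilon S_N$, is at most $4\pi$ by the defining inequality $\epsilon S_N\le 1$. For the second, the bound $\epsilon^{-1}<S_{N+1}$ combined with the hypothesis $S_{N+1}/S_N=O(1)$ reduces $6\epsilon^{-1}T_N$ to a multiple of $S_N T_N$, which is bounded by hypothesis. The third term, $4\sum_{n>N}a_n^2$, is dominated uniformly by $4\|\{a_n\}\|_{\ell^2}^2$, finite thanks to the standing assumption~\eqref{sequ}. Summing these three estimates yields a uniform bound on the quantity inside the outer supremum of $\kappa$, so $\kappa<\infty$ and the \textrm{BMO} conclusion of Theorem~\ref{general} follows at once.

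The Hardy-type inequality is then a standard consequence of the $H^1$-\textrm{BMO} duality recalled in the Introduction. Given $f\in H^1$, I would choose $b_n\in\C$ with $|b_n|=a_n$ so that $\bar b_n\widehat{f}(\nu_n)=a_n|\widehat{f}(\nu_n)|$ for every $n$. By the \textrm{BMO} part just established, $g(x)=\sum b_n e(\nu_n x)$ lies in \textrm{BMO} with $\|g\|_*\le\kappa$. The duality pairing then gives $\sum_n a_n|\widehat{f}(\nu_n)|=\int_0^1 f\bar g\,dx\le C\|f\|_{H^1}\|g\|_*$, which is the desired estimate.

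The main obstacle, such as it is, is pure bookkeeping: ensuring that the three-term upper bound is uniform in $\epsilon$ rather than merely in $N$, so that the outer supremum over $\epsilon\in(0,1)$ in the definition of $\kappa$ remains finite. No new estimate beyond those already packaged into Theorem~\ref{general} is required.
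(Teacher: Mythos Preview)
Your proof is correct and follows exactly the route the paper sketches in the sentence immediately preceding the corollary: pick $N$ with $S_N\le\epsilon^{-1}<S_{N+1}$ and bound each of the three contributions in the expression for $\kappa$ using the two boundedness hypotheses together with $\{a_n\}\in\ell^2$. The Hardy inequality via the $H^1$--$\textrm{BMO}$ duality (choosing phases $b_n$ to align $\bar b_n\widehat f(\nu_n)$ with $a_n|\widehat f(\nu_n)|$) is the standard argument the paper alludes to in its introduction; the only cosmetic refinement would be to run it on the partial sums $g_N=\sum_{n\le N}b_ne(\nu_nx)$, whose $\textrm{BMO}$ seminorms are uniformly $\le\kappa$ by the same theorem, so that the pairing $\int f\bar g_N$ is unambiguously defined before letting $N\to\infty$.
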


The following result gives a general bound for $\|f\|_I$ on small intervals under a hypothesis slightly stronger than that of \eqref{osikiewicz}. Roughly speaking it asserts that with bigger gaps $\text{BMO}$ becomes closer to~$\text{VMO}$ {(vanishing mean oscillation)}.
\begin{proposition}\label{norm_I}
 Assume $\nu_{n+1}/\nu_n\ge 1+\delta \max(a_n, a_{n+1})$ for a certain $\delta>0$ and $n$ large enough. Then 
 \[
  \limsup_{|I|\to 0} 
  \Big\|\sum_{n=1}^\infty
  b_ne(\nu_n x)\Big\|_I
  \le 
  3(12\pi)^{1/3}\delta^{-1}
  \quad
  \text{for any $|b_n|\le a_n$}.
 \]
\end{proposition}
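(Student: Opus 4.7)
The plan is to apply the pointwise inequality underlying the proof of Theorem~\ref{general}, combined with the gap hypothesis to control $S_N$ and $T_N$ in terms of $\nu_N$ alone, and then to optimize the choice of $N$ as $|I|\to 0$. Specifically, I expect the proof of Theorem~\ref{general} to establish, for every $N\in\Z^+$ and every interval $I$,
\[
 \Big\|\sum_{n=1}^\infty b_n e(\nu_n x)\Big\|_I
 \le
 4\pi |I|\, S_N
 +
 \Big(\frac{6\,T_N}{|I|} + 4\sum_{n>N} a_n^2\Big)^{1/2},
\]
which is the pointwise form of the $\sup_\epsilon\inf_N$ that yields $\kappa$ (taking $\epsilon=|I|$); I will use it as a black box.

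From the hypothesis, $\nu_{n+1}-\nu_n \ge \delta a_n \nu_n$ gives $a_n\nu_n \le \delta^{-1}(\nu_{n+1}-\nu_n)$, and a telescoping sum yields $S_N \le \delta^{-1}\nu_{N+1}$. Likewise, $M(n)\ge \delta a_n\nu_{n-1}$ together with $a_n\le \delta^{-1}(\nu_n-\nu_{n-1})/\nu_{n-1}$ produces
\[
 \frac{a_n^2}{M(n)}
 \le \delta^{-2}\,\frac{\nu_n-\nu_{n-1}}{\nu_{n-1}^2}
 = \delta^{-2}\,\frac{\nu_n}{\nu_{n-1}}\Big(\frac{1}{\nu_{n-1}}-\frac{1}{\nu_n}\Big),
\]
and telescoping produces $T_N \le 2\delta^{-2}/\nu_N$ once $n$ is large enough that $\nu_n/\nu_{n-1}\le 2$ (which eventually holds, at least along the indices used below, because $\{a_n\}\in\ell^2$ forces $a_n\to 0$).

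Set $\epsilon=|I|$. For each small $\epsilon$ I choose $N=N(\epsilon)$ so that $\epsilon\nu_N$ tends to a free parameter $\lambda>0$; by passing to a subsequence one arranges $\epsilon\nu_{N+1}\to\lambda$ as well. Since $\sum_{n>N}a_n^2\to 0$, the pointwise bound reduces to
\[
 \|f\|_I \le \frac{1}{\delta}\Big(4\pi\lambda + 2\sqrt{3}\,\lambda^{-1/2}\Big) + o(1).
\]
A direct optimization of $g(\lambda) = 4\pi\lambda + 2\sqrt{3}\,\lambda^{-1/2}$ gives $\lambda^{3/2}=\sqrt{3}/(4\pi)$; at this point $4\pi\lambda = (12\pi)^{1/3}$ and $2\sqrt{3}\,\lambda^{-1/2} = 2(12\pi)^{1/3}$, so $g_{\min} = 3(12\pi)^{1/3}$, delivering exactly $\limsup_{|I|\to 0}\|f\|_I \le 3(12\pi)^{1/3}\delta^{-1}$.

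The principal technical obstacle is the telescoping step for $T_N$: the hypothesis controls $\nu_{n+1}/\nu_n$ only from below, so to obtain the clean bound $T_N\le 2\delta^{-2}/\nu_N$ one must verify that the indices $N$ selected in the optimization can be taken along a subsequence where $\nu_{N+1}/\nu_N$ stays close to $1$. This is generically available because $a_n\to 0$, but handling sequences $\{\nu_n\}$ with occasional anomalously large jumps requires absorbing those jumps into the tail $\sum_{n>N}a_n^2$ without degrading the final constant.
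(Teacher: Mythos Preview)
Your overall architecture matches the paper's: use the pointwise inequality from the proof of Theorem~\ref{general}, telescope $S_N$ and $T_N$ via the gap hypothesis, and optimize. But the gap you flag at the end is real and is precisely the point where your argument breaks down, so this is not a complete proof.

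The assertion that ``$a_n\to 0$ forces $\nu_n/\nu_{n-1}\le 2$ eventually'' is false: the hypothesis $\nu_{n+1}/\nu_n\ge 1+\delta\max(a_n,a_{n+1})$ is one-sided, so $\nu_n/\nu_{n-1}$ may be arbitrarily large (e.g.\ $\nu_n=2^{2^n}$, $a_n=1/n$). In that regime your bound $a_n^2/M(n)\le \delta^{-2}(\nu_n/\nu_{n-1})(1/\nu_{n-1}-1/\nu_n)$ is useless, and the subsequent telescoping to $2\delta^{-2}/\nu_N$ fails. The same issue recurs in your optimization step: you need $\epsilon\nu_N\to\lambda$ and $\epsilon\nu_{N+1}\to\lambda$ simultaneously, which again forces $\nu_{N+1}/\nu_N\to 1$ and is not available.

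The paper handles both problems with a single observation you are missing. Writing $\eta=\pm 1$ so that $M(n)=|\nu_n-\nu_{n+\eta}|$, the claim is
\[
 \frac{a_n^2}{M(n)}\le \delta^{-2}(1+\delta a_n)\,\Big|\frac{1}{\nu_n}-\frac{1}{\nu_{n+\eta}}\Big|,
\]
proved by noting that $\nu_{n+1}/(\nu_{n+1}-\nu_n)^2$ is \emph{decreasing} in $\nu_{n+1}$ (and similarly for $\eta=-1$), so one may substitute the extremal value $\nu_{n+1}=(1+\delta a_n)\nu_n$. This gives a genuine telescoping bound $T_N\le 2\delta^{-2}(1+o(1))/\nu_{N+1}$ with no assumption on the size of the ratios. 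Combined with $S_{N}\le (1+o(1))\delta^{-1}\nu_N$ (telescope to $N-1$ and absorb $a_N\nu_N=o(\nu_N)$), the paper then fixes $N$ by the sandwich $\nu_N\le (3/(16\pi^2))^{1/3}\epsilon^{-1}<\nu_{N+1}$: the $S_N$ term is controlled via the upper bound on $\nu_N$ and the $T_N$ term via the lower bound on $\nu_{N+1}$, so the sharp constant $3(12\pi)^{1/3}\delta^{-1}$ falls out without ever needing $\nu_{N+1}/\nu_N$ bounded.
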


For instance, without appealing to \eqref{f_cond} and  without entering into the kind of Diophantine considerations appearing in the next section,  we have 
\[
 \sum_{n=1}^\infty
 \frac{e(n^kx)}{n}
 \in\text{BMO}
 \qquad\text{and}\qquad 
 \limsup_{|I|\to 0} 
  \Big\|
 \sum_{n=1}^\infty
 \frac{e(n^kx)}{n}
  \Big\|_I
  \le 
  \frac{3(12\pi)^{1/3}}{k}.
\]
The same bound applies if we introduce a arbitrarily chosen signs in the coefficients of this Fourier series. {Therefore} they do not affect significantly the mean oscillation{, a fact} which is far from being intuitive.

\section{A Fourier series in \texorpdfstring{$\text{BMO}$}{BMO}}\label{section_squares}

As mentioned in the introduction, in this section we perform a closer analysis of {the case}
\begin{equation}\label{riem1}
 F(x)
 =
 \sum_{n=1}^\infty
 \frac{e(n^2 x)}n.
\end{equation}
By our previous results we know that $F\in\text{BMO}$ and we {have sharp estimates for} $\|F\|_*$. In particular $F\in L^p$ for every $1\le p<\infty$. The celebrated theorem of Carleson  \cite{carleson} (see \cite{LaTh} for a simplification) implies that the series converges in the usual sense almost everywhere. On the other hand, for each irreducible fraction $p/q$ the normalized quadratic Gauss sum
\begin{equation}\label{g_norm}
 \theta_{p/q}=
 \frac{1}{\sqrt{q}}
 \sum_{n=0}^{q-1}
 e\Big(\frac{p n^2}{q}\Big)
 \qquad\text{verifies}\quad
 |\theta_{p/q}|^2=
 \begin{cases}
  2&\text{if }4\mid q,
  \\
  1&\text{if }2\nmid q,
  \\
  0&\text{otherwise}
 \end{cases}
\end{equation}
{from which}
it is not difficult to deduce that the absolute value of the partial sums of \eqref{riem1} tends to infinity  at $x=p/q$ with $p/q$ {irreducible} when $4\nmid q-2$. In particular the series diverges in a dense set. 
The following result gives a full characterization of the convergence points and shows that the divergence also occurs in irrational values extremely well approximated by rationals. 
Note that this sharpens \cite[Th.1.4]{SeUb}.

\begin{theorem}\label{convergence}
 Let $x\in [0,1)\setminus \mathbb Q$, with $\{p_j/q_j\}_{j=1}^{\infty}$ the convergents corresponding to its continued fraction. Then the Fourier series \eqref{riem1} converges if and only if
 \begin{equation}\label{app_series}
  \frac 12\sum_{j=1}^{\infty} \frac{\theta_{p_j/q_j}}{\sqrt{q_j}} \log \frac{q_{j+1}}{q_j}
 \end{equation}
 does, and in this case the difference between $F(x)$ and this sum is bounded by an absolute constant.
\end{theorem}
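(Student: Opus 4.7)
The plan is to evaluate the partial sums of $F$ along the natural cut-offs $N_J=q_{J+1}$ and prove
$$F_{q_{J+1}}(x)=\tfrac{1}{2}\sum_{j=1}^{J}\frac{\theta_{p_j/q_j}}{\sqrt{q_j}}\log\frac{q_{j+1}}{q_j}+O(1)$$
uniformly in $J$, together with a companion bound $F_N(x)-F_{q_{J+1}}(x)=O\bigl(1+|\theta_{p_{J+1}/q_{J+1}}|\,q_{J+1}^{-1/2}\log(q_{J+2}/q_{J+1})\bigr)$ for $q_{J+1}\le N<q_{J+2}$. Combining the two gives both directions of the theorem: if \eqref{app_series} converges then its $(J{+}1)$-th term tends to zero so $F_N(x)$ is Cauchy and $F(x)$ differs from the sum by $O(1)$; if \eqref{app_series} diverges then so does $F_{q_{J+1}}$, hence $F_N$.

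The core calculation is the single-block estimate
$$\sum_{q_J<n\le q_{J+1}}\frac{e(n^2 x)}{n}=\tfrac{1}{2}\,\frac{\theta_{p_J/q_J}}{\sqrt{q_J}}\log\frac{q_{J+1}}{q_J}+O(1).$$
To prove it I would write $x=p_J/q_J+\beta_J$ with $|\beta_J|\le 1/(q_Jq_{J+1})$, split $n$ into residue classes modulo $q_J$, factor out the phase $e(p_Ja^2/q_J)$, and apply Abel summation to the resulting inner sums. Each inner sum is then approximated by $q_J^{-1}\int_{q_J}^{q_{J+1}}t^{-1}e(t^2\beta_J)\,dt$ up to an error $O(q_J^{-1})$, and the change of variable $u=t^2\beta_J$ converts this integral to $(2q_J)^{-1}\int_{q_J/q_{J+1}}^{q_{J+1}/q_J}u^{-1}e(\pm u)\,du$. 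Because the lower endpoint is $\le 1$ and the upper endpoint is $\ge 1$, splitting at $u=1$ produces a principal logarithmic term $\log(q_{J+1}/q_J)$ together with two bounded oscillatory tails. The Gauss-sum identity \eqref{g_norm} then collapses the $a$-sum to $\theta_{p_J/q_J}\sqrt{q_J}$, producing the expected main term and explaining the factor $\tfrac{1}{2}$ as the Jacobian of $u=t^2\beta_J$.

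The main obstacle will be making the aggregate error genuinely $O(1)$ rather than $O(J)$. The two bounded oscillatory tails of the $u$-integral, once multiplied by $\theta_{p_J/q_J}/\sqrt{q_J}$, contribute a per-block error of size $O(q_J^{-1/2})$, which is geometrically summable thanks to the Fibonacci-type lower bound $q_J\gtrsim \varphi^J$; however, one must first subtract from it an absolute constant (essentially $\int_1^\infty e(\pm u)/u\,du$) that would otherwise cause conditional-convergence headaches. Careful bookkeeping of this constant, together with a boundary-term cancellation between block $J$ and block $J+1$ arising from the two endpoints of the Abel summation, is what yields a true telescoping series of errors bounded uniformly in $J$. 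The $O(q_J^{-1})$ Abel error and the intermediate-$N$ companion estimate are then routine consequences of the same analysis applied to a truncated range $q_{J+1}<n\le N$, where the resulting $u$-integral has upper endpoint below $q_{J+2}/q_{J+1}$ and no new logarithmic factor beyond that already present in the $(J{+}1)$-th term of \eqref{app_series}.
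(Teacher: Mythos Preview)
Your overall strategy coincides with the paper's: reduce Theorem~\ref{convergence} to a single-block estimate of the form
\[
\sum_{q_j\le n<q_{j+1}}\frac{e(n^2x)}{n}
=\frac{\theta_{p_j/q_j}}{2\sqrt{q_j}}\log\frac{q_{j+1}}{q_j}+O\bigl(q_j^{-1/2}\bigr),
\]
which is exactly Proposition~\ref{partial}, and then sum the errors using the Fibonacci growth of $q_j$. Your treatment of the integral $\int t^{-1}e(\beta_J t^2)\,dt$ via the substitution $u=t^2\beta_J$ is also essentially the paper's, which phrases it through the exponential integral $\mathrm{Ei}$.

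The gap is in how you propose to obtain the $O(q_J^{-1/2})$ block error. Splitting into residue classes modulo $q_J$ and approximating each inner sum by an integral gives an error $O(q_J^{-1})$ \emph{per class}: the total variation of $k\mapsto (a+kq_J)^{-1}e\bigl((a+kq_J)^2\beta_J\bigr)$ over the block is of that size, as you can check from $|\beta_J|\le 1/(q_Jq_{J+1})$. Summing over the $q_J$ residue classes therefore costs $O(1)$ per block, and this does not telescope or cancel against neighbouring blocks in any evident way; the Gauss-sum weights $e(p_Ja^2/q_J)$ multiply only the common integral, not the individual Euler--Maclaurin remainders, so they do not help. Your appeal to ``boundary-term cancellation between block $J$ and block $J+1$'' addresses a different (and smaller) source of error and would still leave you with an aggregate $O(J)$.

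The paper avoids this by not splitting into residue classes at all. It quotes the Hardy--Littlewood approximate functional equation in the form of Fiedler--Jurkat--K\"orner,
\[
\sum_{m\le n<N} e(n^2x)=\frac{\theta_{p_j/q_j}}{\sqrt{q_j}}\int_m^N e(h_jt^2)\,dt+O(q_j^{1/2}),
\]
which already contains the square-root saving that your approach is missing (this is, in effect, a careful Poisson summation rather than termwise Euler--Maclaurin). A single Abel summation then converts the $O(q_j^{1/2})$ into the required $O(q_j^{-1/2})$ for the weighted sum. If you want to keep your residue-class viewpoint, you would need to carry out Poisson summation on each class and show that the dual sum is dominated by its zero-frequency term with remainder $O(q_J^{-1/2})$; that is tantamount to reproving the approximate functional equation.
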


For instance, \eqref{riem1} diverges at  $x=\sum_{n=1}^\infty ({10\hspace{-1pt}\uparrow\uparrow\hspace{-1pt} n})^{-1}$ where we have used Knuth's up-arrow notation ${10\hspace{-1pt}\uparrow\uparrow\hspace{-1pt} 1}=10$ and ${10\hspace{-1pt}\uparrow\uparrow\hspace{-1pt} (n+1)}=10^{10\hspace{0pt}\uparrow\uparrow\hspace{0pt} n}$. Each partial sum gives a convergent $p_j/q_j$ with $q_j=10\hspace{-1pt}\uparrow\uparrow\hspace{-1pt} n$ \cite[Th.7.9.8]{MiTa} (although not every convergent comes from a partial sum) and $q_j=O(\log q_{j+1})$ \cite[(7.43)]{MiTa}. Hence the series \eqref{app_series} contains arbitrary large terms. 

\medskip

The previous result suggests that the series \eqref{riem1} is far for being bounded. The next one analyzes the level sets and reveals an intuitively different truth showing very small variations in small intervals.  

Here it is convenient to introduce the notation $I_{p/q}$
to mean the interval of numbers $x\in [0,1)$ such that their convergents include those of the irreducible fraction $p/q$. Equivalently, 
if 
$p/q=[0;a_1,a_2,\dots, a_{j_0}]$
then
$x=[0;a_1,a_2,\dots, a_{j_0},\dots]$.
There is a little ambiguity in this definition because for $a_{j_0}\ne 1$,
$p/q=[0;a_1,\dots, a_{j_0}-1,1]$ and hence the last but one convergent can be skipped. This uncertainty disappears once we fix one of the two possibilities for the length $j_0$. The following results hold irrespectively of this choice. 
We  also use farther the notation $A\asymp B$ to mean $c_1B\le A\le c_2 B$ with $c_1,c_2>0$ absolute constants.

\begin{theorem}\label{level_upper}
There exists a constant $C>0$ such that 
for any $I=I_{p/q}$ and $\lambda>0$
\[
 \big|
 \{x\in I\; :\; |F(x)-F_I|>\lambda\}
 \big|
 \le C
 |I|
 e^{-\lambda\sqrt{2q}}.
\]
\end{theorem}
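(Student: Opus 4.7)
The plan is to combine Theorem~\ref{convergence} with an $L^p$-moment estimate on the cylinder $I=I_{p/q}$, and then apply Markov's inequality optimized in $p$.

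By Theorem~\ref{convergence}, $F(x)=\Phi(x)+O(1)$ almost everywhere, where
\[
\Phi(x)\;:=\;\tfrac{1}{2}\sum_{j\ge 1}\frac{\theta_{p_j/q_j}}{\sqrt{q_j}}\log\frac{q_{j+1}}{q_j}.
\]
Write $p/q=[0;a_1,\dots,a_{j_0}]$; for every $x\in I$ the convergents of index $<j_0$ coincide with those of $p/q$, so the corresponding terms of $\Phi$ are constant on $I$ and cancel when subtracting the mean. Setting
\[
\Psi(x)\;:=\;\tfrac{1}{2}\sum_{j\ge j_0}\frac{\theta_{p_j/q_j}}{\sqrt{q_j}}\log\frac{q_{j+1}(x)}{q_j(x)},
\]
we obtain $F(x)-F_I=\Psi(x)-\Psi_I+O(1)$, so after a harmless additive shift of $\lambda$ it suffices to bound the level sets of $\Psi-\Psi_I$.

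The dominant contribution comes from $j=j_0$: the term
\[
A(x)\;:=\;\frac{\theta_{p/q}}{2\sqrt{q}}\log\frac{q_{j_0+1}(x)}{q}
\]
depends on $x\in I$ only through the partial quotient $a_{j_0+1}(x)$, since $q_{j_0+1}(x)=a_{j_0+1}(x)q+q_{j_0-1}$. The classical cylinder estimate $|\{x\in I:a_{j_0+1}(x)=k\}|\asymp|I|/k^2$, combined with $|\theta_{p/q}|^2\le 2$ and the identity $\sum_{k\ge 1}(\log k)^p/k^2\asymp p!$, gives
\[
\int_I|A-A_I|^p\,dx\;\lesssim\;|I|\,\frac{p!}{(2q)^{p/2}}.
\]
For the tail $R:=\Psi-A$, each depth-$j$ term has scale at most $\sqrt{2/q_j}$ with $q_j$ growing at least geometrically in $j-j_0$; exploiting the quasi-independence of successive partial quotients under the Gauss measure restricted to $I$, one obtains the analogous moment bound $\int_I|R-R_I|^p\lesssim|I|\,p!/(2q)^{p/2}$. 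Combining by the triangle inequality and applying Markov,
\[
\big|\{x\in I:|\Psi(x)-\Psi_I|>\lambda\}\big|\;\lesssim\;|I|\,\frac{p!}{(\lambda\sqrt{2q})^p}.
\]
The choice $p\sim\lambda\sqrt{2q}$ together with Stirling yields the desired exponential decay.

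The main obstacle is the tail estimate: one must make precise the quasi-independence of $\log(q_{j+1}/q_j)$ at distinct depths, and verify that the constant $2$ (rather than some generic $c>0$) survives in the exponent. This sharp constant originates in the inequality $|\theta_{p_j/q_j}|^2\le 2$ and is easily lost to a Cauchy--Schwarz step or to cross-terms between $A$ and $R$ (or between successive depths of $R$), so the bookkeeping of the variances and of the subtracted local means $R_{I^{(k)}}$ on the children $I^{(k)}=\{a_{j_0+1}=k\}\subset I$ has to be done with care.
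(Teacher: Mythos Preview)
Your outline has a genuine gap at precisely the point you yourself flag: the sharp constant $\sqrt{2}$ in the exponent does not survive the ``combine by the triangle inequality'' step. If you grant both moment bounds
\[
 \int_I |A-A_I|^p \;\lesssim\; |I|\,\frac{p!}{(2q)^{p/2}},
 \qquad
 \int_I |R-R_I|^p \;\lesssim\; |I|\,\frac{p!}{(2q)^{p/2}},
\]
then Minkowski gives $\|\Psi-\Psi_I\|_{L^p(I)}\le \|A-A_I\|_p+\|R-R_I\|_p$, hence $\int_I|\Psi-\Psi_I|^p\lesssim 2^p|I|\,p!/(2q)^{p/2}$. Optimizing Markov now forces $p\sim \lambda\sqrt{2q}/2$ and yields only $e^{-\lambda\sqrt{2q}/2}=e^{-\lambda\sqrt{q/2}}$, not $e^{-\lambda\sqrt{2q}}$. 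You acknowledge that the constant ``is easily lost to \dots\ cross-terms between $A$ and $R$'', but the proposal does not say how to avoid this, and in fact your tail bound for $R$ cannot be improved to a smaller scale by a \emph{uniform} factor: when $a_{j_0+1}(x)=1$ one has $q_{j_0+1}=q+q_{j_0-1}$, which may be arbitrarily close to $q$, so the first term of $R$ lives at scale essentially $1/\sqrt{2q}$ too.

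What actually rescues the sharp constant is an arithmetic fact you do not invoke: the consecutive denominators $q=q_{j_0}$ and $q_{j_0+1}$ are coprime, so $4\mid q$ and $4\mid q_{j_0+1}$ cannot hold simultaneously, and hence at most one of $|\theta_{p/q}|$, $|\theta_{p_{j_0+1}/q_{j_0+1}}|$ equals $\sqrt{2}$. The paper exploits this by fixing a finite depth $d$, writing $I=\bigcup_{\vec b}I_{\vec b}$ over the next $d$ partial quotients, using Proposition~\ref{F_size_on_I} and \eqref{prob_seq} to reduce to the bound
\[
 \sum_{\vec b:\ \eta_1\log b_1+\eta_2\log b_2+\tfrac{1}{\sqrt{8}}\sum_{k>2}\log b_k>\lambda\sqrt{q}-C}\ (b_1\cdots b_d)^{-2},
\]
and then applying Lemma~\ref{el_sum}: because the maximal weight $1/\sqrt{2}$ is attained \emph{only once} among $\eta_1,\eta_2,1/\sqrt{8},\dots$, the sum is $O(e^{-\lambda\sqrt{2q}})$. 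In probabilistic language, the tail of a sum of independent scaled exponentials is governed by the single largest scale provided it is isolated; the coprimality is exactly what isolates it. Without inserting that ingredient, a moment/Markov argument of the shape you propose will give $e^{-c\lambda\sqrt{q}}$ for some $c>0$, but not the stated $c=\sqrt{2}$. Your tail estimate for $R$ (``quasi-independence under the Gauss measure'') is also left as an assertion; the paper makes this precise via \eqref{prob_quot}--\eqref{prob_seq}.
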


Note that this is much stronger than the in general optimal inequality \eqref{joni_ineq} applied to our intervals. 

In fact this can be complemented with a lower bound.

\begin{theorem}\label{level_lower}
There exists a constant $C>0$ such that 
for any $I=I_{p/q}$ and $\lambda>0$
\[
 \big|
 \{x\in I\; :\; |F(x)-F_I|>\lambda\}
 \big|
 \ge C
 |I|
 e^{-c_q\lambda \sqrt{q}}
\]
with $c_q=\sqrt{2}$ if $4\mid q$,  $c_q=2$ if $2\nmid q$ and  $c_q=2\sqrt{2}$ otherwise. In particular, by Theorem~\ref{level_upper},
if $4\mid q$
\[
 \big|
 \{x\in I\; :\; |F(x)-F_I|>\lambda\}
 \big|
 \asymp
 |I|
 e^{-\lambda\sqrt{2q}}.
\]
\end{theorem}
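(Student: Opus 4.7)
I would first invoke Theorem~\ref{convergence} to pass from $F$ to the approximating series $\tilde F(x) := \tfrac{1}{2}\sum_{j\ge 1}\theta_{p_j/q_j}q_j^{-1/2}\log(q_{j+1}/q_j)$, so that $F(x) - F_I = \tilde F(x) - \tilde F_I + O(1)$ on $I=I_{p/q}$. Writing $p/q = p_{j_0}/q_{j_0}$, the head $\sum_{j<j_0}$ of $\tilde F$ is constant on $I$ and cancels in the difference, while a short Gauss--Kuzmin-type estimate should give $\langle\tilde F_{\ge j_0}\rangle_I = O(q^{-1/2})$. One may as well assume $\lambda$ is larger than an absolute constant, since otherwise the stated bound is trivial. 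The task then reduces to exhibiting a subset $J\subset I$ with $|J|\asymp|I|e^{-c_q\lambda\sqrt{q}}$ on which $|\tilde F_{\ge j_0}(x)|\ge 2\lambda$, which I plan to do by prescribing one or two continued fraction digits of $x$ past position $j_0$ so that a single term of $\tilde F$ dominates.

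\textbf{Cases with $\theta_{p/q}\ne 0$.} Here the $j=j_0$ term alone does the job. Fixing $a_{j_0+1}(x)=A$ with $A\in[A_0,2A_0]$ yields $q_{j_0+1}\asymp Aq$ and so $\log(q_{j_0+1}/q) = \log A + O(1)$, making the $j_0$-term of modulus $\frac{|\theta_{p/q}|\log A}{2\sqrt{q}}+O(q^{-1/2})$ while all later terms contribute $O((Aq)^{-1/2})$ in total. Summing the sub-interval measures $\asymp |I|/A^2$ over this dyadic range produces a set of measure $\asymp|I|/A_0$. Taking $\log A_0 = 2\sqrt{q}\lambda/|\theta_{p/q}|$ then gives $c_q = 2/|\theta_{p/q}|$, i.e.\ $\sqrt{2}$ when $4\mid q$ and $2$ when $2\nmid q$.

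\textbf{Case $q\equiv 2\pmod 4$.} Now $\theta_{p/q}=0$ and the analogous argument must use the $j=j_0+1$ term. I would force $a_{j_0+1}(x)=1$, so that $q_{j_0+1} = q+q_{j_0-1}$ is odd (because $q$ is even, hence $q_{j_0-1}$ is odd by coprimality) and satisfies $q_{j_0+1}\le 2q$; consequently $|\theta_{p_{j_0+1}/q_{j_0+1}}|=1$. Then, with $a_{j_0+2}(x)=B\in[B_0,2B_0]$, the $(j_0+1)$-term has modulus at least $\frac{\log B}{2\sqrt{q_{j_0+1}}}+O(q^{-1/2})\ge \frac{\log B_0}{2\sqrt{2q}}+O(q^{-1/2})$. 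Choosing $\log B_0 = 2\sqrt{2q}\,\lambda$ yields a sub-interval of measure $\asymp|I|/B_0 = |I|e^{-2\sqrt{2}\lambda\sqrt{q}}$, so $c_q=2\sqrt{2}$. The concluding matching estimate when $4\mid q$ is then immediate from the $c_q=\sqrt{2}$ case and Theorem~\ref{level_upper}.

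\textbf{Main obstacle.} The hard part will be the two error controls: verifying $\langle\tilde F_{\ge j_0}\rangle_I = O(q^{-1/2})$ via a continued-fraction-averaged computation, and bounding the tail $\sum_{j>j_0}$ of $\tilde F$ uniformly on the constructed sub-interval. Securing the precise constant $c_q=2\sqrt{2}$ in the $q\equiv 2\pmod 4$ regime depends crucially on the sharp inequality $q_{j_0+1}\le 2q$ when $a_{j_0+1}=1$: this is the source of the extra $\sqrt{2}$ factor and quantifies the genuine loss caused by the vanishing of $\theta_{p/q}$.
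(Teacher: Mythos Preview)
Your overall architecture matches the paper's: isolate the term $\tfrac{\theta_{p/q}}{2\sqrt{q}}\log(q_{j_0+1}/q)$ by prescribing $a_{j_0+1}$, control the tail, and for $q\equiv 2\pmod 4$ pass to $q_{j_0+1}=q+q_{j_0-1}$ (odd, $<2q$) by setting $a_{j_0+1}=1$. The paper does exactly this, citing Proposition~\ref{F_size_on_I} for the main approximation and Lemma~\ref{metric} (specifically \eqref{prob_seq} with $A_n=n^2$) to carve out a subset of $I_{p'/q'}$ of comparable measure on which the tail $\sum_{n\ge 1}q_{j_0+n}^{-1/2}\log a_{j_0+n+1}$ is $O(q^{-1/2})$. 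That resolves your second ``main obstacle''.

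There is, however, a genuine gap in your first step. Invoking Theorem~\ref{convergence} only gives $F(x)-F_I=\tilde F_{\ge j_0}(x)+O(1)$, and this $O(1)$ is fatal for the uniformity of the constant $C$. Indeed, to force $|F(x)-F_I|>\lambda$ you then need the leading term to exceed $\lambda+C_0$, i.e.\ $\log A>c_q\sqrt{q}(\lambda+C_0)=c_q\lambda\sqrt{q}+c_qC_0\sqrt{q}$, so your constructed set has measure $\asymp|I|e^{-c_q\lambda\sqrt{q}}\cdot e^{-c_qC_0\sqrt{q}}$. The extra factor $e^{-c_qC_0\sqrt{q}}$ tends to $0$ with $q$, so you do not get a universal $C>0$. (Your ``$\ge 2\lambda$'' in the plan versus ``$\log A_0=c_q\lambda\sqrt{q}$'' in the case analysis is a symptom of this: either choice fails, one by doubling the exponent and the other by losing uniformity.)

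The fix is precisely Proposition~\ref{F_size_on_I}, which asserts
\[
 F(x)-F_I=\tfrac12\sum_{q_j\ge q}\frac{\theta_{p_j/q_j}}{\sqrt{q_j}}\log\frac{q_{j+1}}{q_j}+O(q^{-1/2}),
\]
so the additive error is $O(q^{-1/2})$ rather than $O(1)$. Then the threshold becomes $\log A>c_q\lambda\sqrt{q}+O(1)$, the measure is $\asymp|I|e^{-c_q\lambda\sqrt{q}}$ with an absolute implied constant, and the argument goes through. Note that Proposition~\ref{F_size_on_I} is not a consequence of Theorem~\ref{convergence} alone: its proof splits $F=F^-+F^+$ at frequency $q$ and uses the mean value theorem on $F^-$ together with a Gauss-sum bound on $\sum_{n<q}ne(n^2\xi)$ to get the crucial $O(q^{-1/2})$, rather than routing through $\tilde F$.
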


\

The proof of these results is based on a relation between the series \eqref{app_series} and the oscillation that has independent interest.
According to Theorem~\ref{convergence}, \eqref{app_series} acts as a proxy for the function $F(x)$. On the other hand, $|I_{p/q}|$ is comparable to $q^{-2}$ and the intuitive ideas backing the uncertainty principle suggest that only the frequencies less that $q^2$ matter for the the average on~$I_{p/q}$. This leads to suspect that the part of \eqref{app_series} corresponding to $q_j\ge q$ is the one giving us information about the oscillation.  The next result makes this program rigorous in a quite precise form.

\begin{proposition}\label{F_size_on_I}
For $I=I_{p/q}$ and $x\in I$ with convergents $\{p_j/q_j\}_{j=1}^\infty$, we have 
\[
 F(x)-F_I 
 =
 \frac 12\sum_{q_j\ge q} \frac{\theta_{p_j/q_j}}{\sqrt{q_j}} \log \frac{q_{j+1}}{q_j}
 +
 O\Big(\frac{1}{\sqrt q}\Big)
\]
with an absolute $O$-constant. 
\end{proposition}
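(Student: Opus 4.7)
The plan is to refine Theorem \ref{convergence} so that its $O(1)$ remainder is replaced by a term of oscillation $O(1/\sqrt{q})$ on $I=I_{p/q}$, and then to exploit that the low-index terms of the series \eqref{app_series} are constant on $I$ and disappear upon subtracting $F_I$.

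Let $j_0$ be the index with $p_{j_0}/q_{j_0}=p/q$ (with the length fixed by the stated convention). For every $x\in I$, the convergents $p_j/q_j$ with $j\le j_0$ agree with those of $p/q$, while the tail $j>j_0$ varies with $x$ through the continuation $a_{j_0+1},a_{j_0+2},\dots$ of the continued fraction. Consequently, the ``head''
\[
 C(I):=\frac 12\sum_{j<j_0}\frac{\theta_{p_j/q_j}}{\sqrt{q_j}}\log\frac{q_{j+1}}{q_j}
\]
is constant on $I$ and will be annihilated when we form $F(x)-F_I$.

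The central step I would aim for is a refinement of Theorem \ref{convergence} of the shape
\[
 F(x)=\frac 12\sum_{j\ge 1}\frac{\theta_{p_j/q_j}}{\sqrt{q_j}}\log\frac{q_{j+1}}{q_j}+R(x),
\]
where the \emph{oscillation} $|R(x)-R(x')|$ is $O(1/\sqrt q)$ for all $x,x'\in I$. This should come from revisiting the inductive proof of Theorem \ref{convergence}: at level $j$ one writes $x=p_j/q_j+\beta_j$, partitions $n$ into residues modulo $q_j$, and uses the Gauss sum identity \eqref{g_norm} together with a smooth partial summation on the Poisson dual side to produce the main term $\frac{\theta_{p_j/q_j}}{2\sqrt{q_j}}\log(q_{j+1}/q_j)$ and a remainder of size $1/\sqrt{q_j}$ whose $x$-dependence enters only through the tail of the continued fraction beyond level $j$. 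For $j<j_0$ this dependence is frozen on $I$ (so the remainder is constant on $I$), while for $j\ge j_0$ the Fibonacci-type growth $q_j\ge c\,q\,\phi^{j-j_0}$ makes the accumulated oscillation $\sum_{j\ge j_0}O(1/\sqrt{q_j})=O(1/\sqrt q)$.

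Granted such a refinement, subtracting $F_I$ kills $C(I)$ together with the $j<j_0$ part of $R$, leaving the displayed tail minus its $I$-average plus an $O(1/\sqrt q)$ error. It remains to bound the $I$-average of $\frac 12\sum_{j\ge j_0}\frac{\theta_{p_j/q_j}}{\sqrt{q_j}}\log(q_{j+1}/q_j)$ by $O(1/\sqrt q)$. Parametrizing $I$ by admissible continuations of the continued fraction, the subinterval on which $a_{j_0+1},\dots,a_{j+1}$ are prescribed has length $\asymp 1/(q_j q_{j+1})$, while $|I|\asymp 1/q^2$; combined with $|\theta_{p_j/q_j}|\le\sqrt 2$, $\log(q_{j+1}/q_j)\asymp\log a_{j+1}$, the growth $q_j\ge c\,q\,\phi^{j-j_0}$, and the convergent sum $\sum_k\log k/(k(k+1))=O(1)$ (which controls the expectation of $\log a_{j+1}$ under the Gauss measure), the $j$-th term contributes $O(q^{-1/2}\phi^{-(j-j_0)/2})$ to the average, which is geometrically summable in $j$.

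The principal obstacle is the first step: extracting from the proof of Theorem \ref{convergence} a localized remainder whose oscillation on $I_{p/q}$ is genuinely $O(1/\sqrt q)$ rather than the unconditional $O(1)$. Once this localized version is in hand, the remaining arithmetic is continued-fraction bookkeeping, in which the gain $1/\sqrt q$ emerges from the Fibonacci growth of the denominators together with the cancellation of the head $C(I)$ against $F_I$.
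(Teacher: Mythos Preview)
Your outline correctly identifies the second half of the argument: the head $C(I)$ is constant on $I$, and the $I$-average of the tail $\frac12\sum_{q_j\ge q}\theta_{p_j/q_j}q_j^{-1/2}\log(q_{j+1}/q_j)$ is $O(q^{-1/2})$ by the continued-fraction metric estimate together with the Fibonacci-type growth of the $q_j$. That part matches the paper.

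The gap is in your first step. You assert that the level-$j$ remainder, for $j<j_0$, ``is constant on $I$'' because its $x$-dependence ``enters only through the tail of the continued fraction beyond level $j$''. But on $I=I_{p/q}$ the partial quotients beyond level $j_0$ vary freely, so for $j<j_0$ the tail beyond level $j$ is \emph{not} frozen; concretely, the error in Proposition~\ref{partial} at level $j$ depends on $h_j=x-p_j/q_j$ (through the exponential integral $\mathrm{Ei}(2\pi i|h_j|t^2)$), and $h_j$ varies across $I$. So these low-level remainders are not individually constant on $I$. What is true is that their \emph{sum} equals $F^-(x)-C(I)$ with $F^-(x)=\sum_{n<q}e(n^2x)/n$, and since $C(I)$ is constant the real task is to show $|F^-(x)-F^-_I|=O(q^{-1/2})$. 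This does not follow from the structure of the continued-fraction expansion alone; it needs a genuine exponential-sum input. The paper supplies exactly this missing ingredient: by the mean value theorem $|F^-(x)-F^-_I|\le 4\pi|I|\,\big|\sum_{n<q}n\,e(n^2\xi)\big|$, and the Weyl/van der Corput bound $\sum_{n<q}n\,e(n^2\xi)=O(q^{3/2})$ (from \cite[Th.~6]{FiJuKo}) together with $|I|<q^{-2}$ gives $O(q^{-1/2})$. Once you insert this estimate, your proposal collapses to the paper's proof: split $F=F^-+F^+$ at frequency $q$, control $F^-$ by the mean value theorem plus exponential-sum bound, and control $F^+(x)$ and $F^+_I$ via Proposition~\ref{partial} and Lemma~\ref{metric}.
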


We finish this section pointing out that there is also an upper bound for the measure of the level sets of the oscillation in arbitrary intervals but it depends, so to speak, on the rationals with smaller denominator that the considered interval contains. This is the meaning of the next result because any subinterval of $[0,1)$ is contained in some~$I_{p/q}$.

\begin{proposition}\label{subintervals}
 The conclusion of Theorem~\ref{level_upper} still holds when $I$ is an interval included in $I_{p/q}$. 
\end{proposition}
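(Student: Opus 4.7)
The plan is to reduce to Theorem \ref{level_upper} applied at the finest continued-fraction scale still containing $I$. First I would let $q^* \ge q$ be the largest denominator such that $I \subset I_{p^*/q^*}$ for some fraction $p^*/q^*$; this maximum exists because $|I_{p^*/q^*}| \asymp 1/(q^*)^2 \ge |I|$ bounds $q^*$. Since $e^{-\lambda\sqrt{2q^*}} \le e^{-\lambda\sqrt{2q}}$, it suffices to prove the bound with $q$ replaced by $q^*$, so one may assume $I$ is contained in $I_{p/q}$ but in no proper sub-interval. By the hierarchical partition of $I_{p/q}$ into sub-intervals $J_a = I_{p_a/q_a}$ (with $|J_a|\asymp 1/(aq)^2$) indexed by the next partial quotient, this maximality forces $I$ to straddle at least one boundary between consecutive $J_a$ and $J_{a+1}$.

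Next I would control the shift of averages via the distribution-function identity and Theorem \ref{level_upper} applied to $I_{p/q}$:
\[
 |F_I - F_{I_{p/q}}| \le \frac{1}{|I|}\int_I |F - F_{I_{p/q}}| \le \frac{1}{\sqrt{2q}}\bigl(\log^+(C|I_{p/q}|/|I|) + 1\bigr).
\]
Combined with $|F(x) - F_I| \le |F(x) - F_{I_{p/q}}| + |F_I - F_{I_{p/q}}|$ and Theorem \ref{level_upper} itself, this will yield the desired bound directly when $|I| \asymp |I_{p/q}|$, since the logarithmic term is then $O(1)$ and the shift can be absorbed into the exponential factor. When $|I| \ll |I_{p/q}|$, however, the naive form of this estimate loses a factor $|I_{p/q}|/|I|$, which must be recovered by finer means.

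The hard part will be this recovery step in the regime $|I| \ll |I_{p/q}|$, where one must effectively localize the mechanism of Theorem \ref{level_upper} to $I$. Proposition \ref{F_size_on_I} reveals that $\{x \in I_{p/q} : |F(x) - F_{I_{p/q}}| > \mu\}$ is essentially concentrated on those sub-intervals $J_a$ with $a \ge e^{c\mu\sqrt q}$, where $c = 2/|\theta_{p/q}| \ge \sqrt{2}$. Intersecting with $I$ restricts the contributing $a$-values to those actually occurring as $a_{j_0+1}(x)$ on $I$, and a direct calculation using $|J_a|\asymp 1/(aq)^2$ will show the measure of this intersection is $\lesssim |I|e^{-\mu\sqrt{2q}}$, with the exponent $\sqrt{2q}$ arising from the extremal $|\theta_{p/q}| = \sqrt{2}$. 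The remaining technical challenge is to handle the interaction between this localized first-order estimate and the higher-order tail $\frac{1}{2}\sum_{q_j > q}\theta_{p_j/q_j}\log(q_{j+1}/q_j)/\sqrt{q_j}$ from Proposition \ref{F_size_on_I} via recursive application of Theorem \ref{level_upper} at the deeper scales $q_a \ge q$, so that the constants in the recursion remain absolute.
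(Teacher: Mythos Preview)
Your overall architecture---reduce to the smallest $I_{p/q}$ containing $I$, view $I$ as meeting a range of children $J_a=I_{p_a/q_a}$ indexed by the next partial quotient $a\in[c,d]$, and feed the higher-order tail back into Theorem~\ref{level_upper} at the deeper scales---is exactly the one the paper uses. The difference lies in how the centering is handled, and there your sketch has a genuine gap.

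You center at $F_{I_{p/q}}$: you bound the shift $S=|F_I-F_{I_{p/q}}|$ by $\frac{1}{\sqrt{2q}}\log^+(C|I_{p/q}|/|I|)\asymp\frac{1}{\sqrt{2q}}\log c$, and then claim that the intersection $\{x\in I:|F(x)-F_{I_{p/q}}|>\mu\}$ has measure $\lesssim |I|e^{-\mu\sqrt{2q}}$. But this last estimate is false when $4\mid q$ and $c$ is large. From Proposition~\ref{F_size_on_I} (ignoring the tail) the level set $\{|F-F_{I_{p/q}}|>\mu\}$ is essentially $\bigcup_{a\ge A}J_a$ with $A=e^{(2/|\theta_{p/q}|)\mu\sqrt q}$; when $|\theta_{p/q}|=\sqrt2$ this is $A=e^{\mu\sqrt{2q}}$, and its intersection with $I$ has measure $\asymp q^{-2}\max(A,c)^{-1}$, which for $A\ge c$ equals $q^{-2}e^{-\mu\sqrt{2q}}\asymp c\,|I|\,e^{-\mu\sqrt{2q}}$, not $|I|\,e^{-\mu\sqrt{2q}}$. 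Feeding $\mu=\lambda-S$ into this only makes matters worse, since $e^{S\sqrt{2q}}\asymp c$ as well. So for $4\mid q$ and $1\ll c\ll e^{\lambda\sqrt{2q}}$ your scheme loses an unrecoverable factor of $c$ (or $c^2$, depending on which claim you take as the starting point).

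The paper's cure is to center differently: write $I=\bigcup_{b=c}^{d-1}I^b$ with $I^b=I_{p'/q'}$, $p'/q'=[0;a_1,\dots,a_{j_0},b]$, apply Theorem~\ref{level_upper} to each $I^b$, and compute the individual shifts
\[
 F_{I^b}-F_I=\frac{\theta_{p/q}}{2\sqrt q}\log\frac{b}{c}+O(q^{-1/2})
\]
directly from Proposition~\ref{partial}. The crucial point is that this shift is $O(q^{-1/2})$ for $b$ near $c$, where most of $|I|$ sits, rather than uniformly of size $\frac{1}{\sqrt{2q}}\log c$ as your distribution-function bound on $|F_I-F_{I_{p/q}}|$ would suggest. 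Summing $|I^b|\min\bigl(1,e^{(|F_{I^b}-F_I|-\lambda)\sqrt{2bq}}\bigr)$ over $b$ then gives the result, the extra factor $\sqrt b$ in the exponent compensating exactly for the growth of $\log(b/c)$. Your plan becomes correct if you replace the single shift $F_I-F_{I_{p/q}}$ by this family of shifts $F_{I^b}-F_I$ and carry out that summation.
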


{\sc Remark.}
Following the ideas in the proof one can actually show that for intervals $I\subset I_{p/q}$ with extremes 
$p/q=[0;a_1,\dots, a_{j_0}]$
and
$[0;a_1,\dots, a_{j_0}, c]$
the lower bound in Theorem~\ref{level_lower} holds.
Notice that its length goes to~$0$ when $c\to \infty$. This shows that the sizes of the level sets of any interval depend on the smallest interval $I_{p/q}$ containing $I$.

\section{Proof of the results}

\begin{proof}[Proof of Theorem~\ref{general}]
 Let $g(x)=\sum_{n=1}^\infty   b_ne(\nu_n x)$
 and $I\subset\mathbb{T}$ an interval with $|I|=\epsilon$. 
 Write $g=g_1+g_2$ with $g_1$ the part of the sum with $n\le N$ and $g_2$ the rest. 
 By the mean value theorem applied to the real and imaginary part of $g_1$ 
 \[
  \|g_1\|_I
  \le
  \Big\|g_1-\int_I g_1\Big\|_\infty
  \le
  4\pi\epsilon S_N.
 \]
 On the other hand, $\|g_2\|_I\le 2\epsilon^{-1}\int_I |g_2|$ and Jensen's inequality implies
 \[
  \frac{\epsilon}{4}
  \|g_2\|_I^2
  \le
  \int_I |g_2|^2
  =
  \epsilon
  \sum_{n=N+1}^\infty |b_n|^2
  +
  \sum_{n=N+1}^\infty
  \ 
  \sum^\infty_{\substack{ m=N+1\\ m\ne n } } 
  \frac{b_ne(\nu_nx) \overline{b_me(\nu_mx)}}{2\pi i(\nu_n-\nu_m)}
  \Big|_{x=r}^s
 \]
 where $I=[r,s]$. {Applying \eqref{hilbert}} to the double sum evaluated at $x=r$ and $x=s$, we have
 \[
  \|g_2\|_I^2
  \le
  4
  \sum_{n=N+1}^\infty |b_n|^2
  +
  \frac{6}{\epsilon}
  \sum_{n=N+1}^{\infty}
  \frac{|b_n|^2}{\min(\nu_n- \nu_{n-1}, \nu_{n+1}-\nu_n)}.
 \]
 Hence $\|g\|_I\le 4\pi\epsilon S_N+(6\epsilon^{-1} T_N + 4 \sum_{n=N+1}^\infty a_n^2)^{1/2}$ for every $N\in \Z^+$ {implying that} $\|g\|_*\le \kappa$.
\end{proof}

\begin{proof}[Proof of Proposition~\ref{norm_I}]
 Following the steps of the proof of Theorem~\ref{general} we have to bound $\kappa$ but with the supremum restricted to $\epsilon<\epsilon_0$ with $\epsilon_0$ arbitrarily small. For each $\epsilon$ we are going to choose $N$ such that 
 \[
  \nu_N\le 
  \epsilon^{-1}\Big(\frac{3}{16\pi^2}\Big)^{1/3}
  < \nu_{N+1}.
 \]
 If $\epsilon_0$ is small enough then $\sum_{N+1}^\infty a_n^2$ is as small as we want and then the result follows if we prove for this choice of $N$ {the following estimate:}
 \begin{equation}\label{red_prop}
  \limsup_{\epsilon\to 0^+}
  \Big(
  4\pi \epsilon S_N
  +\big(6\epsilon^{-1} T_N\big)^{1/2}
  \Big)
  \le 
  3(12\pi)^{1/3}\delta^{-1}.
 \end{equation}
 Under our hypothesis
 \[
  \limsup_{\epsilon\to 0^+}
  \big(
  \epsilon S_{N-1}
  \big)
  \le
  \limsup_{\epsilon\to 0^+}
  \Big(
  \epsilon\sum_{n=1}^{N-1}
  \big(\nu_{n+1}/\nu_n-1)\nu_n\delta^{-1}
  \Big).
 \]
 The sum telescopes and we get, since $S_N=S_{N-1}+o(\epsilon \nu_N\delta^{-1})$ (note that $a_N\to 0$),
 \begin{equation}\label{bSN}
  \limsup_{\epsilon\to 0^+}
  \big(
  4\pi \epsilon S_N
  \big)
  \le 
  \limsup_{\epsilon\to 0^+}
  \big(
  4\pi \epsilon \nu_N\delta^{-1}
  \big)
  \le
  (12\pi)^{1/3}\delta^{-1}.
 \end{equation}

 To bound $T_N$,  we claim {that}
 \[
  \frac{a_n^2}{M(n)}
  \le
  \delta^{-2}(1+\delta a_n)
  \Big|
  \frac{1}{\nu_{n}}-\frac{1}{\nu_{n+\eta}}
  \Big|
 \]
 where $\eta = \pm 1 $ with $M(n)= |\nu_n-\nu_{n+\eta}|$. Note that this claim is equivalent to 
 \[
  \frac{a_n^2\nu_n\nu_{n+\eta}}{\big(\nu_{n+\eta}-\nu_n\big)^2}
  \le
  \delta^{-2}(1+\delta a_n).
 \] 
 If $\eta=1$,  
 $\nu_{n+1}\ge (1+\delta a_n)\nu_n$  and using that $\nu_{n+1}/\big(\nu_{n+1}-\nu_n\big)^2$ decreases 
 in~$\nu_{n+1}$ we get
 \[
  \frac{a_n^2\nu_n\nu_{n+1}}{\big(\nu_{n+1}-\nu_n\big)^2}
  \le
  \frac{a_n^2\nu_n(1+\delta a_n)\nu_n}{\big((1+\delta a_n)\nu_n-\nu_n\big)^2}
  =
  \delta^{-2}(1+\delta a_n).
 \]
 If $\eta = -1$ the argument holds using  $\nu_{n}\ge (1+\delta a_n)\nu_{n-1}$.
 
 Taking into account the claim, we conclude
 \[
  \limsup_{\epsilon\to 0^+}
  \big(
  6\epsilon^{-1} T_N
  \big)
  \le 
  \limsup_{\epsilon\to 0^+}
  \Big(
  12\epsilon^{-1} \delta^{-2}
  \sum_{n=N+1}^\infty
  \big(
  \frac{1}{\nu_{n}}-\frac{1}{\nu_{n+1}}
  \big)
  \Big)
 \]
 where {a} $2$ factor comes from the two possibilities $\eta=\pm 1$. Since the sum telescopes to $\nu_{N+1}^{-1}$ we get
 \begin{equation}\label{bTN}
  \limsup_{\epsilon\to 0^+}
  \big(
  6\epsilon^{-1} T_N
  \big)
  \le 
  4 (12\pi)^{2/3}\delta^{-2}. 
 \end{equation}
 
 {The estimates} \eqref{bSN} and \eqref{bTN} allow us to complete the proof of \eqref{red_prop}.
\end{proof}

\

Theorem~\ref{convergence} is a consequence of the following {Proposition~\ref{partial},} because the partial sum of $F_N$ corresponding to $n<N$  can be written as 
$\sum_{j\le J} \big(F_{q_{j+1}}-F_{q_j}\big) - \big(F_{q_{J+1}}-F_{N}\big)$ with $q_{J}\le N<q_{J+1}$ and $F_{q_{j+1}}-F_{q_j}$ gives each term in \eqref{app_series}.

\begin{proposition}\label{partial}
 With the  notation of Theorem~\ref{convergence}, for $q_j\le m<q_{j+1}$ we have
 \[
  \sum_{m\le n<q_{j+1}}
  \frac{e(n^2x)}{n}
  =
  \frac{\theta_{p_j/q_j}}{2\sqrt{q_j}} \log^+ 
  \Big(
  \frac{q_{j+1}q_j}{m^2}
  \Big)
  +
  O\big(q_j^{-1/2}\big)
 \]
 with an absolute $O$-constant and where $\log^+ t=\max(\log t, 0)$.
\end{proposition}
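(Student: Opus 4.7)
The plan is to exploit the proximity of $x$ to the convergent $p_j/q_j$. Writing $\alpha:=x-p_j/q_j$, the standard theory of continued fractions gives $|\alpha|\asymp 1/(q_jq_{j+1})$. Since the phase $e(n^2p_j/q_j)$ depends only on $n\bmod q_j$, I would decompose the sum into residue classes modulo $q_j$:
\[
 \sum_{m\le n<q_{j+1}} \frac{e(n^2x)}{n}
 =\sum_{r=0}^{q_j-1} e(r^2p_j/q_j)
 \sum_{\substack{n\equiv r\,(\bmod\,q_j)\\ m\le n<q_{j+1}}}
 \frac{e(n^2\alpha)}{n}.
\]
The outer $r$-sum of phases is the complete Gauss sum $\sqrt{q_j}\,\theta_{p_j/q_j}$ from~\eqref{g_norm}, while each inner sum is amenable to analytic approximation because the phase $e(n^2\alpha)$ varies on a scale larger than $\sqrt{q_jq_{j+1}}$.

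For each residue class I would approximate the inner sum by the integral $\frac{1}{q_j}\int_m^{q_{j+1}} e(t^2\alpha)/t\,dt$, justified via Euler--Maclaurin (equivalently, Abel summation against the partial theta sums on the progression). Evaluating with the substitution $w=t^2|\alpha|$ gives
\[
 \frac{1}{q_j}\int_m^{q_{j+1}}\frac{e(t^2\alpha)}{t}\,dt
 =\frac{1}{2q_j}\int_{m^2|\alpha|}^{q_{j+1}^2|\alpha|}\frac{e(\operatorname{sgn}(\alpha)\,w)}{w}\,dw
 =\frac{1}{2q_j}\log^+\!\left(\frac{1}{m^2|\alpha|}\right)+O(q_j^{-1}),
\]
by the standard behavior of the exponential integral (logarithmic divergence at the origin in the real part, bounded imaginary part, and $O(1/w)$ decay at infinity after one integration by parts, with the cutoff $q_{j+1}^2|\alpha|\asymp q_{j+1}/q_j\ge 1$ guaranteeing the latter regime). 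Using $|\alpha|\asymp 1/(q_jq_{j+1})$, this rewrites as $\frac{1}{2q_j}\log^+(q_jq_{j+1}/m^2)+O(1/q_j)$, and multiplying by the Gauss sum $\sqrt{q_j}\,\theta_{p_j/q_j}$ produces the stated main term plus an $O(q_j^{-1/2})$ contribution from the $O(1/q_j)$ remainder.

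The main technical obstacle is controlling the error $\sum_r e(r^2p_j/q_j)\epsilon_r$ from the Euler--Maclaurin remainders $\epsilon_r$. Pointwise bounds yield only $\epsilon_r=O(1/m+1/q_j)$, so a naive triangle inequality over the $q_j$ residues produces $O(1)$, far from the target $O(q_j^{-1/2})$. The critical observation is that the dominant boundary contribution to $\epsilon_r$ has the form $\tfrac12 e(m_r^2\alpha)/m_r$, where $m_r$ is the smallest integer $\ge m$ with $m_r\equiv r\pmod{q_j}$; when weighted by $e(r^2p_j/q_j)$ these reassemble into the short exponential sum
\[
 \sum_{r=0}^{q_j-1}e(r^2p_j/q_j)\frac{e(m_r^2\alpha)}{m_r}
 =\sum_{m\le n<m+q_j}\frac{e(n^2x)}{n},
\]
which, by Weyl's bound on the quadratic exponential sum for $x$ close to $p_j/q_j$ combined with partial summation, is $O(\sqrt{q_j}/m)=O(q_j^{-1/2})$ since $m\ge q_j$. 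An analogous reassembly handles the endpoint contribution at $q_{j+1}$ and the higher-order Euler--Maclaurin terms, delivering the advertised $O(q_j^{-1/2})$ remainder.
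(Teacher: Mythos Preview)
Your main-term computation is correct and matches the paper's: the integral $\int_m^{q_{j+1}} t^{-1}e(t^2\alpha)\,dt$ indeed evaluates to $\tfrac12\log^+(q_jq_{j+1}/m^2)+O(1)$ via the exponential integral, and multiplying by the Gauss sum gives the stated leading term. The boundary-term reassembly into the short sum $\sum_{m\le n<m+q_j}e(n^2x)/n$ is a nice observation and genuinely yields $O(q_j^{-1/2})$.

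The gap is in the last sentence. Euler--Maclaurin on the progression $n\equiv r\pmod{q_j}$ produces, besides the $\tfrac12 f(m_r)$ boundary term, two further contributions that you dismiss too quickly: (i) the discrepancy between $\frac1{q_j}\int_{m_r}^{M_r}$ and $\frac1{q_j}\int_m^{q_{j+1}}$, and (ii) the remainder integral $\int \psi\bigl(\tfrac{s-r}{q_j}\bigr)f'(s)\,ds$. Each is $O(1/m+1/q_j)$ pointwise in $r$, so the triangle inequality over $q_j$ residues gives only $O(1)$. Neither ``reassembles'' into a single short exponential sum in the clean way the boundary term does: for (i) one has to control $\sum_r e(r^2p_j/q_j)\,\mathbf{1}[m_r>s]$, which is a complementary \emph{incomplete} Gauss sum, and for (ii) one must expand $\psi$ in Fourier series and handle Gauss sums with linear twists over all frequencies $h$. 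Both are doable but require real work (essentially re-proving the approximate functional equation), and the one-line claim of ``analogous reassembly'' does not cover it.

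The paper avoids this entirely by reversing the order of operations. It first quotes the approximate functional equation for the \emph{unweighted} sum from Fiedler--Jurkat--K\"orner,
\[
 \sum_{m\le n<N} e(n^2x)=\frac{\theta_{p_j/q_j}}{\sqrt{q_j}}\int_m^N e(h_jt^2)\,dt+O(\sqrt{q_j}),
\]
which packages all the delicate cancellation you are trying to extract by hand, and then applies Abel summation to insert the weight $1/n$. The error $O(\sqrt{q_j})$ becomes $O(\sqrt{q_j}/m)=O(q_j^{-1/2})$ after partial summation, and the same exponential-integral evaluation finishes. If you want to keep your residue-class decomposition, the cleanest fix is to replace Euler--Maclaurin by Poisson summation on each class, which turns the error directly into twisted Gauss sums of size $O(\sqrt{q_j})$; but at that point you are reproving the cited theorem rather than using it.
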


\begin{proof}
 Write $x=p_j/q_j+h_j$. Then $1/2<|h_j|q_jq_{j+1}<1$ \cite[\S7.5]{MiTa}. 
 By \cite[Th.6]{FiJuKo} (this is implicit in the classic work by Hardy and Littlewood \cite{HaLi}) the contribution to the sum of the values with $Cq_{j+1}\le n< q_{j+1}$ is absorbed by the error term. Hence we can assume $m<M$ with $M=q_{j+1}/8$ and restrict the sum to $n<M$. 
 
 For $4\nmid q_j-2$ \cite[Th.5]{FiJuKo} with $\theta=\alpha=A=0$ gives 
 \begin{equation}\label{fiedler}
  \sum_{m\le n<N}
  e(n^2x)
  =
  \frac{\theta_{p_j/q_j}}{\sqrt{q_j}}
  \int_m^N
  e(h_j t^2)\; dt
  +
  O\big(q_j^{1/2}\big).
 \end{equation}
 The result extends to the case $4\mid q_j-2$, in which $\theta_{p_j/q_j}=0$, taking in \cite[Th.6]{FiJuKo} $\theta = 0$, $\alpha=-A=1/2$ and noting that the integral appearing there is $O(q_j)$ by the van der Corput lemma. 
 
 From \eqref{fiedler} and applying Abel's summation formula we deduce
 \[
  \sum_{m\le n<M}
  \frac{e(n^2x)}n
  =
  \frac{\theta_{p_j/q_j}}{2\sqrt{q_j}}
  \int_m^M
  2t^{-1} e(h_j t^2)\; dt
  +
  O\big(q_j^{-1/2}\big).
 \]
 The integral is {$\text{Ei}\big(2\pi i|h_j|M^2\big)-\text{Ei}\big(2\pi i|h_j|m^2\big)$ if $h_j>0$ and its conjugate if $h_j<0$,}
 with $\text{Ei}$ the exponential integral function (see \cite[8.233]{GrRy}). 
 Using $\text{Ei}(ix)=\min(0,\log x)+O(1)$ for $x>0$ \cite[8.215, 8.232]{GrRy} 
 and recalling $|h_j|q_jq_{j+1}\in (1/2,1)$ 
 the proof is complete.
\end{proof}

In the proof of Theorem~\ref{level_upper}, Theorem~\ref{level_lower} and Proposition~\ref{F_size_on_I} we are going to use the following auxiliary result from the metric theory of continued fractions.

\begin{lemma}\label{metric}
 For $x\in [0,1)\setminus\Q$ let $a_j(x)$ be the $j$-th partial quotient of $x$. Let $I=I_{p/q}$ and $j_0$ the length of the continued fraction of $p/q$ i.e., $p/q=p_{j_0}/q_{j_0}$. Then 
 \begin{equation}\label{prob_quot}
  \big|
  \{x\in I\;:\; a_j(x)=k\}
  \big|
  \asymp k^{-2}|I|
  \qquad
  \text{for any $j>j_0$ and $k\in \Z^+$}.
 \end{equation}
 Moreover, if $\{A_n\}_{n=1}^\infty\subset\R$ verifies $A_n\ge 1$ and $S=\sum A_n^{-1}<\infty$ then 
 \begin{equation}\label{prob_seq}
  \log |I| - \log \big|
  \{x\in I\;:\; a_{j_0+n}(x)\le A_n\text{ for }n\in\Z^+\}
  \big|
  \asymp S.
 \end{equation}
\end{lemma}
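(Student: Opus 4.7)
The plan is to reduce both statements of the lemma to the case $j_0 = 0$ (that is, to $I = [0,1)$) via a M\"obius change of variables determined by $p/q$, and then handle the reduced claims using the bounded-distortion property of iterates of the Gauss map $T(x) = \{1/x\}$. Writing $p/q = p_{j_0}/q_{j_0}$ with previous convergent $p_{j_0-1}/q_{j_0-1}$, the map $\phi(y) = (p_{j_0} + y\, p_{j_0-1})/(q_{j_0} + y\, q_{j_0-1})$ is a bijection from $[0,1)$ onto $I$ such that $y = [0; b_1, b_2, \ldots]$ corresponds to $\phi(y) = [0; a_1, \ldots, a_{j_0}, b_1, b_2, \ldots]$; in particular $a_{j_0+n}(\phi(y)) = a_n(y)$. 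A direct computation gives $|\phi'(y)| = (q_{j_0} + y\, q_{j_0-1})^{-2} \asymp q_{j_0}^{-2} \asymp |I|$ uniformly in $y$ (using $q_{j_0-1} \leq q_{j_0}$), so $|\phi(E)| \asymp |I|\cdot|E|$ for every Borel $E \subset [0,1)$. This reduces \eqref{prob_quot} to the claim $|\{y : a_n(y) = k\}| \asymp k^{-2}$ and \eqref{prob_seq} to $-\log|\{y : a_n(y) \leq A_n \text{ for all } n\}| \asymp S$.

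For \eqref{prob_quot}, the same bounded-distortion estimate applies to $T^{n-1}$ restricted to any length-$(n-1)$ cylinder $C$ (where it is a M\"obius map onto $[0,1)$ of the same shape as $\phi$), giving $|C \cap T^{-(n-1)}(E)| \asymp |C|\cdot|E|$ uniformly. Applied to $E = \{a_1 = k\} = [1/(k+1), 1/k)$ (of measure $\asymp k^{-2}$) and summed over cylinders, this yields the estimate. For \eqref{prob_seq}, set $B_n = \{y : a_i(y) \leq A_i \text{ for all } i \leq n\}$. Partitioning $B_{n-1}$ into its constituent length-$(n-1)$ cylinders and applying bounded distortion to the event $\{a_1 > A_n\}$, which has Lebesgue measure $1/(\lfloor A_n \rfloor + 1) \asymp 1/A_n$, will produce
\[
  \frac{|B_n|}{|B_{n-1}|} = 1 - c_n, \qquad c_n \asymp \frac{1}{A_n},
\]
after which telescoping together with $-\log(1-c_n) \asymp c_n$ yields $-\log|B_\infty| \asymp \sum_n A_n^{-1} = S$.

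The main obstacle is that $A_n$ is allowed to be as small as $1$, in which case the bounded-distortion upper bound $c_n \leq C/A_n$ need not be less than $1$ and the identity $-\log(1-c_n) \asymp c_n$ could fail. What saves the argument is the complementary a priori bound $c_n \leq 1 - c'$ for an absolute $c' > 0$, which follows because $\{a_n = 1\} = T^{-(n-1)}([1/2,1))$ has conditional measure bounded below uniformly over cylinders (again by bounded distortion). With $c_n$ bounded away from $1$ from both sides, the equivalence $-\log(1-c_n) \asymp c_n \asymp 1/A_n$ is legitimate in every regime, and summation completes the proof.
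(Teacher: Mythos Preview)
Your proposal is correct and follows essentially the same route as the paper. Both arguments rest on the bounded-distortion property of the M\"obius inverse branches of the Gauss map, and both prove \eqref{prob_seq} by the same telescoping of ratios $|E_N|/|E_{N-1}|$, with the same key observation that this ratio is bounded below by an absolute positive constant (you phrase it as $c_n\le 1-c'$ via the conditional mass of $\{a_n=1\}$, the paper as $|E_N|\ge c_1|E_{N-1}|$ via the case $k=1$ of \eqref{prob_quot}). The only cosmetic differences are that you first reduce to $j_0=0$ by the explicit map $\phi$ and then prove \eqref{prob_quot} from scratch, whereas the paper works directly on $I$ and cites Khinchin for \eqref{prob_quot}; your version is thus a bit more self-contained, but the mathematics is the same.
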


\begin{proof}
 The first formula is a particular case of \cite[Th.34]{khinchin}.
 
 For the second formula we adapt the proof of \cite[Th.10.2.4]{MiTa} or \cite[Th.30]{khinchin}. We can assume $A_n\in\Z^+$ because $\lfloor A_n\rfloor\asymp A_n$.
 Consider the nested sets
 \[
  E_0=I
  \qquad\text{and}\qquad
  E_N=
  \big\{
  x\in I\;:\;
  a_{j_0+n}(x)\le A_n
  \text{ for }1\le n\le N
  \big\}.
 \]
 Freezing the values of $a_{j_0+n}$ for $1\le n\le N-1$ to apply \eqref{prob_quot} and summing later on them in the range $[1,A_n]$ it follows that
 \[
  |E_{N-1}-E_N|
  \asymp
  |E_{N-1}|
  \sum_{k>A_N} k^{-2}
  \asymp
  A_N^{-1}
  |E_{N-1}|.
 \]
 Subtract $|E_{N-1}|$ to this expression and note on the other hand that \eqref{prob_quot} with $j=j_0+N$ and $k=1$ gives
 $|E_{N}|\ge c_1|E_{N-1}|$. Then for $N\ge 1$
 \[
  \max\big(
  c_1, 1-c_2A_N^{-1}
  \big)
  |E_{N-1}|
  \le
  |E_{N}|
  \le
  \big(
  1-c_3A_N^{-1}
  \big)
  |E_{N-1}|.
 \]
 Taking logarithms and using $-x/(1-c)\le\log(1-x)\le -x$ for $x\in [0,c)$ with $c<1$ we deduce
 \[
  \sum_{N=1}^\infty
  \log
  \frac{|E_{N-1}|}{|E_{N}|}
  \asymp S
 \]
 and the sum telescopes to the left hand side of \eqref{prob_seq}.
\end{proof}

We also separate an elementary lemma that will appear in the proof of the upper bound.

\begin{lemma}\label{el_sum}
 Given $\alpha_1,\dots, \alpha_d\in\R^+$  and $X>1$ let 
 $\mathcal{N}=\big\{\vec{n}\in(\Z^+)^d\;:\; n_1^{\alpha_1}n_2^{\alpha_2}\dots n_d^{\alpha_d}>X\big\}$. 
 If $\alpha$ is the maximum of $\alpha_1,\dots, \alpha_d\in\R^+$ and this maximum is reached only once then
 \[
  \sum_{\vec{n}\in\mathcal{N}}
  (n_1n_2\cdots n_d)^{-2}=O\big(X^{-1/\alpha}\big).
 \]
\end{lemma}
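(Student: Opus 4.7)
The plan is to perform the summation one variable at a time. Without loss of generality I will assume $\alpha_1=\alpha$ is the unique maximum, and set $\alpha^*:=\max_{2\le i\le d}\alpha_i$, so that $\alpha^*<\alpha$ by hypothesis. Summing over $n_1$ first with $\vec{n}':=(n_2,\ldots,n_d)$ fixed, the constraint becomes $n_1^\alpha>Y$ where $Y:=X/\prod_{i\ge 2}n_i^{\alpha_i}$, and the elementary estimate $\sum_{n>T}n^{-2}=O(\min(1,T^{-1}))$ (for $T>0$) bounds the inner sum by $O\bigl(\min\bigl(1,\,X^{-1/\alpha}\prod_{i\ge 2}n_i^{\alpha_i/\alpha}\bigr)\bigr)$. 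I then split the outer sum over $\vec{n}'$ into two pieces: $\mathcal{B}$ (where $Y>1$) and $\mathcal{A}$ (where $Y\le 1$, equivalently $\prod_{i\ge 2}n_i^{\alpha_i}\ge X$).

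For $\mathcal{B}$ the factor $X^{-1/\alpha}$ pulls out, leaving $\sum_{\vec{n}'\in(\Z^+)^{d-1}}\prod_{i\ge 2}n_i^{\alpha_i/\alpha-2}$; this converges absolutely because each exponent $\alpha_i/\alpha-2$ is strictly less than $-1$, which is exactly where the hypothesis $\alpha_i<\alpha$ for $i\ge 2$ enters. Hence $\mathcal{B}=O(X^{-1/\alpha})$. For $\mathcal{A}$, using $n_i\ge 1$ and $\alpha_i\le \alpha^*$ one has $n_i^{\alpha_i}\le n_i^{\alpha^*}$, so the constraint forces $\prod_{i\ge 2}n_i\ge X^{1/\alpha^*}$. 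I would then invoke the auxiliary estimate
\[
 \sum_{\substack{\vec{m}\in(\Z^+)^{d-1}\\ m_1\cdots m_{d-1}\ge M}}(m_1\cdots m_{d-1})^{-2}=O\bigl(M^{-1}(\log M)^{d-2}\bigr),
\]
which follows from a routine dyadic decomposition (splitting each $m_i$ into dyadic blocks and counting lattice points $(k_2,\ldots,k_d)$ with $\sum k_i\ge \log_2 M$), or equivalently by induction on $d-1$. Applied with $M=X^{1/\alpha^*}$ this yields $\mathcal{A}=O\bigl(X^{-1/\alpha^*}(\log X)^{d-2}\bigr)$.

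Because $\alpha^*<\alpha$ strictly, the factor $X^{-1/\alpha^*}/X^{-1/\alpha}=X^{1/\alpha-1/\alpha^*}$ is a negative power of $X$ which dominates the logarithmic factor $(\log X)^{d-2}$; therefore $\mathcal{A}=o(X^{-1/\alpha})$, and combining with the bound on $\mathcal{B}$ yields the stated estimate. The main subtle point — and the only place where real care is needed — is to recognise that the uniqueness of the maximum is used in \emph{both} pieces: in $\mathcal{B}$ it ensures absolute convergence of the outer sum, and in $\mathcal{A}$ it provides the polynomial separation $\alpha-\alpha^*>0$ that is crucial to absorb the logarithmic loss from the auxiliary bound.
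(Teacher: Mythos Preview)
Your proof is correct and shares the paper's core idea: sum first over the variable carrying the unique maximal exponent $\alpha$, then exploit the strict gap $\alpha^*<\alpha$ to control the remaining $(d-1)$-fold sum. The execution differs only in how that outer sum is handled. The paper collapses $n_2,\dots,n_d$ into a single variable $m=n_2\cdots n_d$ weighted by the divisor function $\tau_{d-1}(m)$, enlarges the constraint to $n_1 m^{\delta}>X^{1/\alpha}$ with $\delta=\alpha^*/\alpha<1$, and disposes of $\tau_{d-1}(m)$ via the crude bound $\tau_{d-1}(m)=O(m^\epsilon)$ with $\epsilon=(1-\delta)/2$. You instead split according to whether $\prod_{i\ge 2}n_i^{\alpha_i}$ exceeds $X$ and, on the piece $\mathcal{A}$, invoke the sharper but equally standard lattice-point estimate $\sum_{\prod m_i\ge M}(\prod m_i)^{-2}=O\bigl(M^{-1}(\log M)^{d-2}\bigr)$. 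Both arguments use the uniqueness of the maximum in exactly the same two places, so the trade-off is purely stylistic: the paper's route is a bit more compact, while yours avoids quoting the divisor bound and is in that sense more self-contained.
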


\begin{proof}
 Assume $d>1$. Let $\delta<1$ be the ratio between $\alpha$ and the biggest $\alpha_j$  less than $\alpha$. Then the sum is at most $\sum_{nm^\delta>Y} n^{-2}m^{-2}\tau_{d-1}(m)$ where $Y=X^{1/\alpha}$ and $\tau_{d-1}(m)$ is the number of representation of $m$ as a product of $d-1$ factors. It is known that $\tau_{d-1}(m)=O(m^\epsilon)$ for any $\epsilon>0$ \cite[\S18.1]{HaWr}. Therefore choosing $\epsilon =(1-\delta)/2$ shows that the sum above is $O(Y^{-1})$. 
\end{proof}

\begin{proof}[Proof of Proposition~\ref{F_size_on_I}]
 We perform a subdivision {similar to the one} in the proof of Theorem~\ref{general} but {observing} now {that} Proposition~\ref{partial} provides approximations rather than bounds. 
 
 Let $F^-$ be the partial sum of \eqref{riem1} corresponding to $n<q$ and let $F^+=F-F^-$. The mean value theorem applied to the real and imaginary part of~$F^-$ implies:
 \[
  \big| F^-(x)-F^-_I\big|
  \le 
  4\pi
  |I|
  \Big|
  \sum_{n<q} n e(n^2\xi)
  \Big|
  \qquad\text{for some }\xi\in I.
 \]
 The sum is $O\big(q^{3/2}\big)$ by \cite[Th.6]{FiJuKo} and $|I|<q^{-2}$. Hence
 \begin{equation}\label{Fmvt}
  F(x)-F_I
  =
  F^+(x)-F^+_I
  +O\big(q^{-1/2}\big)
 \end{equation}
 and Proposition~\ref{partial} with $m=q_j$ {yields:}
 \[
  F(x)-F_I
  =
  -F_I^+
  +
 \frac 12\sum_{q_j\ge q} \frac{\theta_{p_j/q_j}}{\sqrt{q_j}} \log \frac{q_{j+1}}{q_j}
 +
 O\big(q^{-1/2}\big).
 \]
 It only remains to prove that $F^+_I$ is negligible. For each $y\in I$ let $a_j(y)$ be the partial quotients in its continued fraction. The recurrence relation for the denominators of the convergents $p_j(y)/q_j(y)$ implies $q_{j+1}(y)/q_j(y)\le 2a_{j+1}(y)$. Then a new application of Proposition~\ref{partial} with $m=q_j$ gives
 \[
  F_I^+
  =
  { O\Big(
  q^{-1/2}
  +
  \sum_{q_j\ge q}
  \frac{1}{|I|}
  \int_I
  q_j^{-1/2}(y)
  \log a_{j+1}(y)\; dy
  \Big)  }
 \]
 {and this is $O\big(q^{-1/2}\big)$ because $q_j(y)$ grows as a geometric progression, in fact $q_{j+k-1}/q_j$ is at least the $k$-th Fibonacci number, and $(\log a_{j+1})_I=O(1)$ by \eqref{prob_quot} since $\sum k^{-2}\log k<\infty$.}
\end{proof}

\begin{proof}[Proof of Theorem~\ref{level_upper}]
 We can assume that $\lambda\sqrt{q}$ is larger than a constant because the result becomes trivial otherwise.
 
 Given $\vec{b}=(b_1,b_2,\dots, b_d)\in (\Z^+)^d$ with $d>2$ a constant to be fixed later{,} if 
 $p/q=[0;a_1,\dots, a_{j_0}]$
 let $I_{\vec{b}}$ be the interval $I_{p'/q'}$ corresponding to 
 $p'/q'=[0;a_1,\dots, a_{j_0},b_1,\dots, b_d]$. In this way $I=\bigcup I_{\vec{b}}$. By Proposition~\ref{F_size_on_I} and {the estimate} $q_{j+1}/q_j=a_{j+1} +O(1)$, we have for each $x\in I_{\vec{b}}$
 \[
  |F(x)-F_I|
  \le
  \sum_{k=1}^d
  \frac{\eta_k\log b_k}{\sqrt{q_{j_0+k-1}}}
  +
  \frac{1}{\sqrt{2}}
  \sum_{n=1}^\infty
  \frac{\log a_{j_0+d+n}(x)}{\sqrt{q_{j_0+d+n-1}(x)}}
  +O\big(q^{-1/2}\big)
 \]
 where $\eta_k=\frac 12 |\theta_{p_{j_0+k-1}/q_{j_0+k-1}}|$ which is at most $1/\sqrt{2}$ by \eqref{g_norm}.
 
 {Therefore the points} $x\in I_{\vec{b}}$ such that {$a_{j_0+d+n}(x)\le C n^2 e^{\lambda \sqrt{2q}}$ for $n\in\Z^+$ with $C$ a large enough constant} form a set of measure differing from $|I_{\vec{b}}|$ in {$O\big(|I_{\vec{b}}|e^{-\lambda \sqrt{2q}}\big)$}
 by \eqref{prob_seq}. {We can then} assume this bound and {observe also that} the second sum {is} $O\big(\lambda\sqrt{q/q_{j_0+d}}\big)$ {because $q_{j+2}/q_j>2$}. In this way we obtain an upper bound for $|F(x)-F_I|$ depending on $\vec{b}$ {but} not {on} $x$.
 Successive applications of \eqref{prob_seq} give
 \begin{equation}\label{up_sum}
  \big|
  \{x\in I\;:\;
  |F(x)-F_I|>\lambda\}
  \big|
  \le 
  |I|
  \sum_{\vec{b}\in\mathcal{B}_\lambda}
  (b_1b_2\cdots b_d)^{-2}
 \end{equation}
 with $\mathcal{B}_\lambda$ the set of $(b_1,\dots, b_d)$ such that
 \begin{equation}\label{cond_b}
  \sum_{k=1}^d
  \frac{\eta_k\log b_k}{\sqrt{q_{j_0+k-1}}}
  +
  C\lambda
  \sqrt{\frac{q}{q_{j_0+d}}}
  +Cq^{-1/2}>\lambda
 \end{equation}
 {where $C$ is a certain universal} constant. Choosing $d$ such that {$C^2q/q_{j_0+d}<1/5$} the sum contributes at least $\lambda/2$ (recall that we can assume that $\lambda\sqrt{q}$ is large), so that $b_k>e^{\lambda\sqrt{q}/(2d\eta_k)}$ for some $k$.
 Using the recurrence formulas
 $q_{j_0+d}/q\ge b_k$ and we {can conclude} that the second term in \eqref{cond_b} is $O\big(q^{-1/2}\big)$. Moreover {$q/q_{j_0+k-1}<1/2$ for $k>2$ and $q_{j_0+1}>q$}. {Therefore} for some constant $C$
 \[
  \mathcal{B}_\lambda
  \subset
  \Big\{
  \vec{b}\;:\;
  \eta_1 \log b_1
  +
  \eta_2\log b_2
  +
  \frac{1}{\sqrt{8}}
  \sum_{k=4}^d\log b_k
  >
  \lambda\sqrt{q}-C
  \Big\}
 \]
 
 Note that $\eta_1$ and $\eta_2$ cannot be simultaneously {equal to} $1/\sqrt{2}$ because {that} would require $4\mid q$ and $4\mid q_{j_0+1}$ and they are coprime. Hence $\mathcal{B}_\lambda$ is contained in a set $\mathcal{N}$ as in Lemma~\ref{el_sum} with $\alpha=1/\sqrt{2}$ and $X=C e^{\lambda\sqrt{q}}$ and the expected bound follows from \eqref{up_sum}. 
\end{proof}

\begin{proof}[Proof of Theorem~\ref{level_lower}]
 If $4\nmid q$ and $2\mid q$, the fraction $p'/q'$ obtained adding a last partial quotient~$1$ to the continued fraction of $p/q$ verifies $2\nmid q'$ and $q'<2q$. 
 We have $I_{p'/q'}\subset I_{p/q}$ and $|I_{p'/q'}|\asymp |I_{p/q}|$ hence the result in this case is deduced from the case $2\nmid q$.
 
 If $4\mid q$ or $2\nmid q$, by \eqref{g_norm}, Proposition~\ref{F_size_on_I} and $q_{j+1}/q_j=a_{j+1} +O(1)$, we have
 \[
  |F(x)-F_I|
  =
  \frac{\log a_{j_0+1}(x)}{c_q\sqrt{q}}
  +
  O\Big(
  q^{-1/2}
  +
  \sum_{n=1}^\infty
  \frac{\log a_{j_0+n+1}(x)}{\sqrt{q_{j_0+n}(x)}}
  \Big).
 \]
 
 Clearly $a_{j_0+1}(x)=k$ if and only if $x\in I_{p'/q'}$ with  $p'/q'$ obtained from $p/q$ adding a last partial quotient $k$ and therefore $q'\asymp kq$. Choosing for instance $A_n=n^2$ in \eqref{prob_seq} applied to $I_{p'/q'}$ we deduce the existence of $J_k\subset I_{p'/q'}$ with $|J_k|\asymp |I_{p'/q'}|$ such that
 \[
  |F(x)-F_I|
  =
  c_q^{-1}q^{-1/2}
  \log k
  +
  O\big(
  q^{-1/2}
  \big)
  \qquad\text{for}\quad x\in J_k.
 \]
 Hence if $\mathcal{K}=\{k\;:\; \log k > c_q\lambda\sqrt{q}+C\}$ with a large enough $C$
 \[
 \big|
 \{x\in I\; :\; |F(x)-F_I|>\lambda\}
 \big| 
 \ge
 \sum_{k\in\mathcal{K}}
 |J_k|
 \]
 and this yields the result because $|J_k|\asymp (q')^{-2}\asymp k^{-2}|I|$.
\end{proof}

\begin{proof}[Proof of Proposition~\ref{subintervals}]
 We start with some preliminary reductions. 
 Clear\-ly we can suppose that $\lambda\sqrt{q}$ is larger than a fixed constant.
 Say that the extremes 
 of the interval $I=(\alpha,\beta)$ are 
 \[
  \alpha=[0;a_1,\dots,a_{j_0},\alpha'] 
  \quad\text{and}\quad
  \beta=[0;a_1,\dots,a_{j_0}, \beta']
 \]
 with $p/q=[0;a_1,\dots a_{j_0}]$, 
 $\alpha' = [c;c_1,c_2,\dots]$
 and
 $\beta' = [d;d_1,d_2,\dots]$. 
 Let us suppose $j_0$ odd (the other case is completely similar) then $\alpha$ and $\beta$ are increasing in $c$ and $d$ and  we have $c\le d$. In fact we can assume $c<d$ because if $c=d$ we would take 
 $a_{j_0+1}=c$ to get  a bigger $q$ leading to a stronger result. 
 
 By the properties of the continued fractions we have $cd q^2 |I| \asymp \beta'-\alpha'$.
 But an issue appears here related by the slight ambiguity of continued fractions: that is 
 we have $\beta'-\alpha'\asymp d-c$ 
 except perhaps in the case $d=c+1$, $c_1=1$ because of the identity $[c;1]=[c+1]$. Let us assume now $c_1\ne 1$ whenever $d=c+1$ (we will discuss the remaining special case later).
 Under this assumption we have
 \begin{equation}\label{length_I}
  |I| \asymp q^{-2}\big(c^{-1}-d^{-1}\big)
 \end{equation}
 and it is enough to prove the result when $c_j=d_j=\infty$ i.e., when these partial quotients do not appear, because without these partial quotients and replacing $d$ by  $d+1$ we get a subinterval of $I_{p/q}$ that contains  to $I$ and has comparable measure by \eqref{length_I}.
 
 Let $I^b=I_{p'/q'}$ with $p'/q'=[0;a_1,\dots, a_{j_0},b]$. 
 Then $\bigcup_{b=c}^{d-1} I^b$
 differs from $I$ in a finite number of points and the triangle inequality assures
 \[
  \big|
  \{x\in I\; :\; |F(x)-F_I|>\lambda\}
  \big|
  \le 
  \sum_{b=c}^{d-1}
  \big|
  \big\{x\in I^b\; :\; |F(x)-F_{I^b}|>\lambda- |F_{I^b}-F_I|\big\}
  \big|.
 \]
 Combining the trivial estimate and Theorem~\ref{level_upper}, we have
 \begin{equation}\label{st_pr}
  \big|
  \{x\in I\; :\; |F(x)-F_I|>\lambda\}
  \big|
  =O
  \Big(
  \sum_{b=c}^{d-1}
  |I^b|\min\big(
  1,  
  e^{(|F_{I^b}-F_I|-\lambda)\sqrt{2bq}}
  \big)
  \Big).
 \end{equation}
 Our aim is to approximate $|F_{I^b}-F_I|$ and substitute it in this formula.
 
 \medskip
 
 Note that we obtained \eqref{Fmvt} from the mean value theorem on $I_{p/q}$ and that it also applies to any of its subintervals. Hence for $x\in I^b$ in the convergence set of $F$ we have
 \[
  F_{I^b}-F_I
  =
  \big(F(x)-F_I\big)
  +
  \big(F(x)-F_{I^b}\big)
  =
  F_{I^b}^+-F_I^+
  +O\big(q^{-1/2}\big)
 \]
 and Proposition~\ref{partial} approximates $F_{I^b}^+$ and $F_I^+$ giving
 \[
  F_{I^b}-F_I
  =
  \frac{\theta_{p/q}}{2\sqrt{q}}
  \big(\log b-(\log a_{j_0+1}(x))_I\big)
  +O(E)
  +O\big(q^{-1/2}\big)
 \]
 with
 \[
  E
  =
  \sum_{q_j>q}
  \bigg(
  \frac{1}{|I|}
  \int_{I}
  q_j^{-1/2}(y)\log a_{j+1}(y)\; dy
  +
  \frac{1}{|I^b|}
  \int_{I^b}
  q_j^{-1/2}(y)\log a_{j+1}(y)\; dy
  \bigg).
 \]
 Using \eqref{prob_quot} applied to each interval $I^b$ and the exponential growth of $q_j$, we have $E=O\big(q^{-1/2}\big)$. But this argument also proves
 $\big(\log (a_{j_0+1}(x)/c)\big)_I = O(1)$, hence
 $(\log a_{j_0+1}(x))_I=\log c+O(1)$ and we can conclude that
 \[
  F_{I^b}-F_I
  =
  \frac{\theta_{p/q}}{2\sqrt{q}}
  \log \frac{b}{c}
  +O\big(q^{-1/2}\big).
 \]
 By \eqref{g_norm}, $\sqrt{2q}|F_{I^b}-F_I|\le \log(b/c)+O(1)$
 and the minimum in \eqref{st_pr} is reached by the exponential for $b$ in 
 \[
  \mathcal{B}=\big\{
  b\in [c, d-1]\cap \Z\;:\; \log (b/c)
  \le \lambda\sqrt{2q}-C
  \big\}
  \quad
  \text{with $C$ a constant}.
 \]
 Note that by our initial assumption on $\lambda$, we can suppose $\lambda\sqrt{2q}-C$ to be greater than a large positive constant. In particular, if $d\le 2c$ we can suppose that $\mathcal{B}$ includes the whole range. In this case, the exponential decay and $|I^b|\asymp b^{-2}q^{-2}$ shows that the contribution is comparable to that of the first term $b=c$ and we get
 \[
  \big|
  \{x\in I\; :\; |F(x)-F_I|>\lambda\}
  \big|
  =O
  \big(
  q^{-2}c^{-2}e^{-\lambda\sqrt{2cq}}
  \big)
  =O
  \big(
  |I|e^{-\lambda\sqrt{2q}}
  \big).
 \]
 If $d>2c$ then $\mathcal{B}$ may not cover the whole range and we have to add the contribution in \eqref{st_pr} of the remaining terms in which the minimum may be~$1$, namely
 \begin{equation}\label{notinB}
  \sum_{b\not\in\mathcal{B}}
  |I^b|
  =
  O\Big(
  q^{-2}
  \sum_{b\not\in\mathcal{B}}
  b^{-2}
  \Big)
  =
  O\big(
  q^{-2}
  \max_{b\not\in\mathcal{B}}
  b^{-1}
  \big)
  =
  O\big(
  q^{-2}
  c^{-1}
  e^{-\lambda\sqrt{2q}}
  \big)
 \end{equation}
 and this proves the result because under $d>2c$ \eqref{length_I} implies $q^{-2}c^{-1}\asymp |I|$. 

 \medskip 
 
 Finally, we mention how to deal with the special case $d=c+1$, $c_1=1$. In this case we redefine $p/q$
 as
 $
 [0;a_1;\dots, a_{j_0},c,1]
 =
 [0;a_1;\dots, a_{j_0},c+1]
 $
 and
 consider $I_1^b$ and $I_2^b$ as the intervals $I^b$ for each of these representations of $p/q$.
 Proceeding as before, we can assume that the extremes of the interval $I$ are 
 $
 [0;a_1;\dots, a_{j_0},c,1,c_2]
 $
 and
 $
 [0;a_1;\dots, a_{j_0},c+1,d_1]
 $.
 It is easy to see that 
 $
  \bigcup_{b=c_2}^\infty
  I_1^b
  \cup
  \bigcup_{b=d_1}^\infty
  I_2^b
 $
 differs from $I$ in a countable set and $|I|\asymp q^{-2}\max(c_2^{-1},d_1^{-1})$ by \eqref{length_I}
 applied to each of these unions.
 We can repeat the proof above, with a bigger $q$, replacing formally $I^b$ by $I_j^b$ and $c$ by $c_2$ or $d_1$ and $d$ by $\infty$ to get in $\mathcal{B}$
 $q^{-2}\max(c_2^{-1}, d_1^{-1})$ instead of $q^{-2}c^{-1}$, which matches the measure of $I$. 
\end{proof}

\bibliography{bibbmo}
\bibliographystyle{plain}

\end{document}